\documentclass[leqno]{amsart}
\usepackage{amssymb}
\usepackage{mathrsfs}
\usepackage{amsmath, amsfonts, vmargin, enumerate}
\usepackage{graphicx}
\usepackage{color}
\usepackage{verbatim}
\usepackage{amsthm}
\usepackage{latexsym, bm}

\usepackage{euscript}
\usepackage{dsfont}

\input xypic

\xyoption {all}

 \makeatletter

%%%%%%%%%%%%%%%%%%%%%%%%%%%%%%%%%
%\addtolength{\textwidth}{1.0in} \addtolength{\hoffset}{-0.6in}
%\addtolength{\voffset}{-0.18in}
%\addtolength{\textheight}{1.0in}
%%%%%%%%%%%%%%%%%%%%%%%%%%%%%%%%%

%%%%%%%%%%%%%%%%%%%%%%%%%%%%%%%%%
%%%%%%%%%%%%%%%%%%%%%%%%%%%%%%%%%

%\setmarginsrb{3cm}{3cm}{3cm}{3cm}{1mm}{6mm}{0mm}{10mm}

%%%%%%%%%%%%%%%%%%%%%%%%%%%%%%%%%
%%%%%%%%%%%%%%%%%%%%%%%%%%%%%%%%%

\newtheorem{thm}{Theorem}%[section]

\newtheorem{lem}[thm]{Lemma}
\newtheorem{defi}[thm]{Definition}
\newtheorem{remark}[thm]{Remark}
\newtheorem{example}[thm]{Example}
\newtheorem{pb}[thm]{Problem}

\newenvironment{rk}{\begin{remark}\rm}{\end{remark}}

%\numberwithin{equation}{section}

\newcommand{\real}{{\mathbb R}}
\newcommand{\nat}{{\mathbb N}}
\newcommand{\ent}{{\mathbb Z}}
\newcommand{\com}{{\mathbb C}}

\newcommand{\T}{{\mathbb T}}

\newcommand{\A}{{\mathcal A}}
\newcommand{\B}{{\mathcal B}}

\renewcommand{\a}{\alpha}
\renewcommand{\b}{\beta}
\newcommand{\g}{\gamma}
\newcommand{\Ga}{\Gamma}

\renewcommand{\d}{\delta}

\newcommand{\e}{\varepsilon}
\newcommand{\f}{\varphi}

\renewcommand{\l}{\lambda}
\renewcommand{\O}{\Omega}
\renewcommand{\o}{\omega}
\newcommand{\s}{\sigma}

\newcommand{\ot}{\otimes}

\newcommand{\8}{\infty}

\newcommand{\la}{\langle}
\newcommand{\ra}{\rangle}
\newcommand{\wt}{\widetilde}

\newcommand{\n}{\noindent}

\newcommand{\les}{\lesssim}

\newcommand{\be}{\begin{align*}}
\newcommand{\ee}{\end{align*}}
\newcommand{\beq}{\begin{equation}}
\newcommand{\eeq}{\end{equation}}
\newcommand{\beqn}{\begin{equation*}}
\newcommand{\eeqn}{\end{equation*}}

\begin{document}

\title[Vector-valued Littewood-Paley-Stein theory]{Vector-valued Littewood-Paley-Stein theory for semigroups II}

\thanks{{\it 2000 Mathematics Subject Classification:} Primary: 46B20, 42B25. Secondary: 47B06, 47A35.}
\thanks{{\it Key words:} Analytic semigroups, analytic contractions, Littewood-Paley-Stein inequalities, uniformly convex Banach spaces, martingale type and cotype}

\author[Quanhua  Xu]{Quanhua Xu}
\address{Institute for Advanced Study in Mathematics, Harbin Institute of Technology,  Harbin 150001, China; and Laboratoire de Math{\'e}matiques, Universit{\'e} de Bourgogne Franche-Comt{\'e}, 25030 Besan\c{c}on Cedex, France; and Institut Universitaire de France}
\email{qxu@univ-fcomte.fr}

\date{}
\maketitle

 \begin{abstract}
 Inspired by a recent work of Hyt\"onen and Naor, we solve a problem left open in our previous work joint with Mart\'{\i}nez and Torrea  on the vector-valued Littlewood-Paley-Stein theory for symmetric diffusion semigroups. We prove a similar result in the discrete case, namely, for any $T$ which is the square of a symmetric Markovian operator on a measure space  $(\Omega, \mu)$. Moreover, we show that  $T\ot{\rm Id}_X$ extends to an analytic contraction on $L_p(\Omega; X)$ for any $1<p<\infty$ and any uniformly convex Banach space $X$.
   \end{abstract}

\bigskip

%%%%%%%%%%%%%%%%%%%%%%%%%%%%%%%%%%%%%%%%%%%%%%%%%%%%%%
%%%%%%%%%%%%%%%%%%%%%%%%%%%%%%%%%%%%%%%%%%%%%%%%%%%%%%

\section{Introduction}

%%%%%%%%%%%%%%%%%%%%%%%%%%%%%%%%%%%%%%%%%%%%%%%%%%%%%%
%%%%%%%%%%%%%%%%%%%%%%%%%%%%%%%%%%%%%%%%%%%%%%%%%%%%%%

Let $(\O, \A, \mu)$ be  a $\s$-finite measure space.  By a \emph{symmetric diffusion semigroup} on $(\O,\A, \mu)$ in Stein's sense \cite[section~III.1]{stein}, we mean a family  $\{T_t\}_{t>0}$ of linear maps satisfying the following properties:
 \begin{enumerate}[$\bullet$]
 \item $T_t$ is a contraction on $L_p(\O)$ for every $1\le p\le \8$;
  \item $T_tT_s=T_{t+s}$;
 \item $\lim_{t\to 0}T_tf=f$ in $L_2(\O)$ for every $f\in L_2(\O)$;
 \item $T_t$ is positive (i.e. positivity preserving) and $T_t1=1$;
 \item $T_t$ is  selfadjoint on $L_2(\O)$.
\end{enumerate}
It is a classical fact that the orthogonal projection from $L_2(\O)$ onto the  fixed point subspace of
$\{T_t\}_{t> 0}$ extends to a contractive projection on $L_p(\Omega)$ for
every $1\le p\le\infty$. We will denote this projection by $\mathsf F$. Then $\mathsf F$ is also positive and   $\mathsf F\big(L_p(\Omega)\big)$ is
 the fixed point subspace of $\{T_t\}_{t> 0}$ on $L_p(\Omega)$ (cf. e.g. \cite{ds}).

Stein  proved in \cite[chapter~IV]{stein} the following result which considerably extends the classical  inequality on the Littlewood-Paley  $g$-function in harmonic analysis: For every $1<p<\8$
 \beq\label{LPS}
 \|f-\mathsf F(f)\|_{L_p(\O)}\approx \left\|\left(\int_0^\8\Big |t\frac{\partial}{\partial t} T_t f\Big|^2\,\frac{dt}t\right)^{1/2}\right\|_{L_p(\O)}\,,\quad\forall\, f\in L_p(\O),
 \eeq
where the equivalence constants depend only on $p$.

The vector-valued Littlewood-Paley-Stein theory was developed in \cite{X, MTX}. Given a Banach space $X$, we denote by $L_p(\O; X)$ the usual $L_p$ space of strongly measurable functions from $\O$ to $X$. It is a well known elementary fact that if $T$ is a positive bounded operator on $L_p(\O)$ with $1\le p\le\8$, then $T\ot{\rm Id}_X$ is bounded on $L_p(\O; X)$ with the same norm. For notational convenience, throughout this paper, we will denote $T\ot{\rm Id}_X$ by $T$ too. Thus $\{T_t\}_{t>0}$ is also a semigroup of contractions on $L_p(\O; X)$ for any Banach space $X$.

The one-sided vector-valued extension of \eqref{LPS} was obtained in \cite{MTX} not for the semigroup $\{T_t\}_{t> 0}$ itself but for its  subordinated Poisson semigroup $\{P_t\}_{t>0}$ that is
defined by
 $$P_tf=\frac{1}{\sqrt\pi}\,\int_0^\infty \frac{e^{-s}}{\sqrt s}\,T_{\frac{t^2}{4s}}\,f\,ds.$$
$\{P_t\}_{t> 0}$ is again a symmetric diffusion semigroup. Recall that if $A$ denotes the negative infinitesimal generator of $\{T_t\}_{t>0}$, then $P_t=e^{-\sqrt A\,t}$.

Let $1<q<\8$. Recall that a Banach space $X$ is of {\it martingale cotype} $q$ if there exists a positive constant $C$ such that every finite $X$-valued $L_q$-martingale $(f_n)$ defined on some probability space  satisfies the following inequality
 $$\sum_n\mathbb{E}\big\|f_n-f_{n-1}\big\|_X^q\le C^q\sup_n\mathbb{E}\big\|f_n\big\|_X^q\,,$$
where $\mathbb{E}$ denotes the expectation on the underlying probability space.  We then must have $q\ge2$. $X$ is of {\it martingale type} $q$ if the reverse inequality holds. It is easy to see that $X$ is of martingale cotype $q$ iff the dual space $X^*$ is of martingale type $q'$, where $q'$ denotes the conjugate index of $q$. We refer to \cite{pis1,pis3} for more information.

\smallskip

The following is the principal result of \cite{MTX}.  In the sequel, we will use the abbreviation  $\partial=\partial/\partial t$.

\begin{thm}[Mart\'{\i}nez-Torrea-Xu]\label{MTX}
 Let $1< q<\8$ and $X$ be a Banach space.
 \begin{enumerate}[\rm(i)]
 \item $X$ is of martingale cotype $q$ iff for every $1<p<\8$ $($or equivalently, for some $1<p<\8)$ there exists a constant $C$ such that every  subordinated Poisson semigroup $\{P_t\}_{t>0}$ as above  satisfies the following inequality
  $$\left\|\left(\int_0^\8\big\|t\, \partial P_t f\big\|_X^q\,\frac{dt}t\right)^{1/q}\right\|_{L_p(\O)}\le C\, \big\|f\big\|_{L_p(\O; X)}\,,\quad\forall\, f\in L_p(\O; X).$$
 \item $X$ is of martingale type $q$ iff for for every $1<p<\8$ $($or equivalently, for some $1<p<\8)$ there exists a constant $C$ such that every  subordinated Poisson semigroup $\{P_t\}_{t>0}$ as above  satisfies the following inequality
  $$\big\|f\big\|_{L_p(\O; X)}\le \big\|\mathsf F(f)\big\|_{L_p(\O; X)}+C \left\|\left(\int_0^\8\big\|t\, \partial P_t f\big\|_X^q\,\frac{dt}t\right)^{1/q}\right\|_{L_p(\O)}\,,\quad\forall\, f\in L_p(\O; X).$$
  \end{enumerate}
\end{thm}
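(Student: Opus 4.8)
The plan is to establish (i) in full and then derive (ii) from it by a duality argument, the two being linked by the duality between martingale type and cotype recalled in the introduction. Throughout write $\G_q(f)=\big(\int_0^\8\|t\,\partial P_t f\|_X^q\,\frac{dt}t\big)^{1/q}$ for the Poisson square function appearing in the statement, and note that $\G_q(f)=\G_q(f-\mathsf F(f))$ since $P_t$ fixes $\mathsf F(f)$ so that $\partial P_t\mathsf F(f)=0$. The two directions of (i) — martingale cotype $q$ $\Rightarrow$ the inequality, and the inequality $\Rightarrow$ martingale cotype $q$ — are of a very different nature and I would treat them separately.

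For the forward direction of (i) the idea is to manufacture a martingale out of the semigroup and dominate the continuous square function $\G_q(f)$ by a discrete martingale square function. First I would discretize: comparing $t\,\partial P_t f$ across the blocks of a sampling sequence $(t_k)$ reduces $\G_q(f)$, up to a constant depending only on $q$, to a sum of increments $\sum_k\|P_{t_{k}}f-P_{t_{k+1}}f\|_X^q$, using that $\{P_t\}$ is a smoothing (analytic) semigroup so that $t\,\partial P_t f$ does not oscillate within a block. Next I would invoke Rota's dilation theorem: since each $P_t$ is a symmetric Markovian operator, the powers of a fixed $P_{t_0}$ can be represented on an enlarged probability space by conditional expectations against a reversed filtration, so that the sampled orbit $\big(P_{kt_0}f\big)_k$ becomes, after lifting, a reversed martingale and the increments become martingale differences. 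Applying the martingale cotype $q$ inequality to this martingale and projecting the estimate back down to $\O$ via the $L_p$-contractivity of conditional expectation yields $\|\G_q(f)\|_{L_p(\O)}\les\|f\|_{L_p(\O;X)}$ with a constant depending only on $p$, $q$ and the cotype constant of $X$.

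For the reverse direction of (i) I would feed martingales into a concrete semigroup designed so that the assumed inequality reads off the martingale cotype inequality. By a standard reduction it suffices to verify $\sum_n\mathbb{E}\|f_n-f_{n-1}\|_X^q\les\sup_n\mathbb{E}\|f_n\|_X^q$ for a model family, for instance Walsh--Paley martingales on the Boolean cube $\{-1,1\}^m$. On that cube the random-walk heat semigroup is a symmetric diffusion semigroup whose spectral decomposition is indexed by the Walsh characters; subordinating to its Poisson semigroup and evaluating $\G_q$ on a martingale-adapted function, the continuous integral is comparable to the discrete square function $\big(\sum_n\|f_n-f_{n-1}\|_X^q\big)^{1/q}$. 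The hypothesis then forces the martingale cotype inequality, and the reduction propagates it to all martingales.

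Finally, (ii) follows from (i) by duality. Using the Calder\'on-type reproducing identity $\la f-\mathsf F(f),\,g\ra=c\int_0^\8\la t\,\partial P_t f,\,t\,\partial P_t g\ra\,\frac{dt}t$ together with H\"older's inequality applied pointwise in the $X$--$X^*$ duality and in the $t$-integral with conjugate exponents $q,q'$, and then in $\O$ with $p,p'$, one obtains $|\la f-\mathsf F(f),g\ra|\les\|\G_q(f)\|_{L_p(\O)}\,\|\G_{q'}(g)\|_{L_{p'}(\O)}$, where $\G_{q'}(g)=\big(\int_0^\8\|t\,\partial P_t g\|_{X^*}^{q'}\,\frac{dt}t\big)^{1/q'}$. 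Since $X$ of martingale type $q$ forces $X^*$ to be of martingale cotype $q'$, part (i) applied to $X^*$ bounds $\|\G_{q'}(g)\|_{L_{p'}(\O)}\les\|g\|_{L_{p'}(\O;X^*)}$; taking the supremum over $g$ in the unit ball of $L_{p'}(\O;X^*)$ yields (ii), and the same scheme run in reverse gives its converse. The main obstacle is the forward direction of (i): making Rota's dilation cohere with the subordination and, above all, carrying out the discretization so that the continuous square function is genuinely dominated by a martingale square function with a constant independent of $f$ and of the particular semigroup. This is where the interplay between the $\frac{dt}t$ weight, the analyticity of $\{P_t\}$, and the reversed-martingale structure must be controlled uniformly.
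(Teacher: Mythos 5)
First, a point of order: the present paper does not prove Theorem~\ref{MTX} at all --- it is quoted as the principal result of \cite{MTX} --- so your proposal can only be measured against the proof in \cite{MTX} and against the closely related arguments that Section~\ref{pf-heat} of this paper uses for Theorem~\ref{heat}. Measured against either, the forward direction of your part (i) has a genuine gap at the passage from $p=q$ to general $p$. Your discretization-plus-Rota scheme, made precise, is the Hyt\"onen--Naor telescoping $\partial P_t=\sum_{j}\partial P_{2^{j}t}\big(P_{2^{j}t}-P_{3\cdot2^{j}t}\big)$ combined with the uniform smoothing bound $\sup_{t>0}\|t\partial P_t\|<\8$ on $L_p(\O;X)$ (which, for the subordinated Poisson semigroup, does hold for \emph{every} Banach space $X$, by differentiating the subordination kernel --- this is exactly why \cite{MTX} could treat $P_t$ but not $T_t$). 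But that smoothing bound is an \emph{operator-norm} bound on $L_p(\O;X)$, not a pointwise bound on $\O$; consequently what your argument controls is $\big(\int_0^\8\|t\partial P_tf\|_{L_p(\O;X)}^q\,\frac{dt}t\big)^{1/q}$, i.e.\ an $L_q(\frac{dt}t;L_p(\O;X))$ quantity, whereas the theorem asks for the $L_p(\O;L_q(\frac{dt}t;X))$ norm. The two coincide only when $p=q$, and for $p<q$ Minkowski's inequality goes the wrong way. This is not a technicality: it is precisely why the present paper, after its Lemma~\ref{p=q}, spends the rest of Section~\ref{pf-heat} on Stein's complex interpolation for the fractional averages $\mathrm M^\a_t$, and why \cite{MTX} instead routes the whole proof through the ergodic averages --- where a single Rota dilation of one operator serves all scales simultaneously, martingale cotype is applied in its $L_p$ form, and $\mathbb{E}_\A$ acts entrywise as a contraction on $L_p(\O;\el_q(X))$ --- and then dominates $t\partial P_t$ by those averages through the subordination formula, a pointwise-in-$\O$ domination by positive kernels which survives for all $p$. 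Your sketch contains no substitute for either mechanism; ``the $L_p$-contractivity of conditional expectation'' does not bridge $p\neq q$.

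Second, your converse direction of (i) rests on an unproved, and in this generality very likely false, comparability. The heat semigroup of the cube $\{-1,1\}^m$ is diagonalized by the Walsh characters $w_S$ and damps them according to the cardinality $|S|$, i.e.\ by Fourier level, whereas the Walsh--Paley martingale structure is governed by $\max S$. For a martingale-adapted $f$, the square function of the (subordinated Poisson of the) cube semigroup is thus a chaos-level object, not a filtration object, and no equivalence with $\big(\sum_n\|f_n-f_{n-1}\|_X^q\big)^{1/q}$ is available for a general Banach space $X$; assertions of this kind for cube semigroups are essentially at the level of open problems. The converse in \cite{MTX} is instead obtained from the Poisson semigroup of the circle via the main theorem of \cite{X}, where dyadic martingales are transferred to harmonic functions on the disc through the classical martingale/harmonic-function correspondence, so that the required comparison is a theorem rather than a hope. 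Finally, your duality derivation of (ii) from (i) is correct --- it is the same polarization-plus-H\"older argument this paper uses for Theorem~\ref{heat}(ii) --- but it inherits the gaps above, and note also that the converse of (ii) does not follow by ``running the scheme in reverse'': a lower $g$-function bound for $X$ does not dualize formally into an upper bound for $X^*$, and this direction needs its own argument in \cite{MTX}.
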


Note that  the above theorem for the Poisson semigroup of the torus $\T$ was first proved in \cite{X}. The main problem left open in \cite{MTX} asks whether the  theorem holds for the semigroup $\{T_t\}_{t> 0}$ itself  instead of its subordinated Poisson semigroup $\{P_t\}_{t>0}$ (see Problem~2 on page~447 of \cite{MTX}). Very recently, Hyt\"onen and Naor \cite{HN} proved that the answer is affirmative for the heat semigroup of $\real^n$ and for $p=q$; the resulting inequality plays a key role in their work  on  the approximation of Lipschitz functions by affine maps.  Stimulated by their result and using a clever idea of them, we are able to resolve the problem in  full generality.

\begin{thm}\label{heat}
  Let $X$ be a Banach space and $k$ a positive integer.
 \begin{enumerate}[\rm(i)]
 \item If $X$ is of martingale cotype $q$ with $2\le q<\8$, then for every  symmetric diffusion semigroup $\{T_t\}_{t>0}$ and for every $1<p<\8$ we have
  $$\left\|\left(\int_0^\8\big\|t^k \partial^k T_t f\big\|_X^q\,\frac{dt}t\right)^{1/q}\right\|_{L_p(\O)}\le C\, \big\|f\big\|_{L_p(\O; X)}\,,\quad\forall\, f\in L_p(\O; X),$$
 where $C$ is a constant depending only on $p, q, k$ and the martingale cotype $q$ constant of $X$.
 \item If $X$ is of martingale type $q$ with $1< q\le2$, then for every  symmetric diffusion semigroup $\{T_t\}_{t>0}$  and for every $1<p<\8$ we have
  $$\big\|f\big\|_{L_p(\O; X)}\le \big\|\mathsf F (f)\big\|_{L_p(\O; X)}+C \left\|\left(\int_0^\8\big\|t^k \partial^k T_t f\big\|_X^q\,\frac{dt}t\right)^{1/q}\right\|_{L_p(\O)}\,,\quad\forall\, f\in L_p(\O; X),$$
  where $C$ is a constant depending only on $p, q, k$ and the martingale type $q$ constant of $X$.
  \end{enumerate}
\end{thm}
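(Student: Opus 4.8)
The plan is to reduce Theorem~\ref{heat} to Theorem~\ref{MTX} by comparing the heat square function with the Poisson one. First I would dispose of the fixed part: since $T_t(\mathsf F f)=\mathsf F f$ is constant in $t$, one has $\partial^k T_t f=\partial^k T_t(f-\mathsf F f)$ for $k\ge1$, so in part~(i) we may assume $\mathsf F f=0$. The two parts are moreover formally dual — $X$ is of martingale type $q$ iff $X^*$ is of martingale cotype $q'$ — and the inequality in~(ii) should follow by pairing against $g\in L_{p'}(\O;X^*)$ and invoking the Calder\'on-type reproducing identity $f-\mathsf F f=\Gamma(k)^{-1}\int_0^\8(-1)^k t^k\partial^k T_t f\,\frac{dt}{t}$ together with H\"older in the vertical variable. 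Thus the heart of the matter is part~(i), where it suffices to prove the comparison
$$\Big\|\Big(\int_0^\8\|t^k\partial^k T_t f\|_X^q\,\tfrac{dt}{t}\Big)^{1/q}\Big\|_{L_p(\O)}\;\les\;\Big\|\Big(\int_0^\8\|s\,\partial P_s f\|_X^q\,\tfrac{ds}{s}\Big)^{1/q}\Big\|_{L_p(\O)},$$
after which Theorem~\ref{MTX}(i) closes the argument.

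To set up the comparison I would write everything through the functional calculus of the generator $A$. With $T_t=e^{-tA}$ and $P_s=e^{-s\sqrt A}$ one has $t^k\partial^k T_t=(-1)^k(tA)^k e^{-tA}$ and $s\,\partial P_s=-\,s\sqrt A\,e^{-s\sqrt A}$; the natural matching of scales is $s\sim\sqrt t$, reflecting the Hyt\"onen--Naor intuition that the heat operator at scale $t$ should be compared with the Poisson object (in their case a martingale) at scale $\sqrt t$. The goal is a subordination identity
$$t^k\partial^k T_t f=\int_0^\8 \kappa_k(t,s)\,\big(s\,\partial P_s f\big)\,\tfrac{ds}{s},$$
so that Minkowski's integral inequality in the $L_q(X)$ variable, followed by a Schur/Hardy estimate $\sup_t\int_0^\8|\kappa_k(t,s)|\frac{ds}{s}<\8$ and its transpose, yields the displayed comparison.

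The main obstacle is that this subordination cannot be carried out with an honest $L_1$ kernel on $(0,\8)$: solving the scalar identity $(t\lambda^2)^k e^{-t\lambda^2}=\lambda\int_0^\8\kappa_k(t,s)e^{-s\lambda}\,ds$ would force $\lambda\mapsto e^{-t\lambda^2}$ to be the Laplace transform of a finite measure, which is impossible since it is unbounded on the imaginary axis (equivalently, it fails to be completely monotone). In other words, synthesising the Gaussian decay of the heat profile out of the merely exponential decay of the Poisson profile requires genuine cancellation, so no pointwise-in-$\O$ scalar comparison of the two square functions can exist. This is exactly the point at which I would invoke analyticity: the fact that $\{T_t\}$, hence $\{P_z\}$, extends to a bounded analytic semigroup on $L_p(\O;X)$ for uniformly convex $X$ (the property announced in the abstract, and martingale cotype $q$ spaces are such) allows the defining contour for $\kappa_k$ to be deformed into the sector of holomorphy, where the Gaussian factor $e^{z^2/4t}$ is controlled and the resulting complex kernel does satisfy the required Schur/Hardy bounds.

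Accordingly I expect the real work to split into two pieces: (a) proving the uniform sectorial (analyticity) bounds for the semigroup on $L_p(\O;X)$, where uniform convexity of $X$ is essential; and (b) turning these bounds into the integrability of the (complex) subordination kernel $\kappa_k$, so that Minkowski plus the Schur/Hardy test gives the comparison and Theorem~\ref{MTX}(i) finishes part~(i), with part~(ii) following by the duality and reproducing identity noted above. The passage from $k=1$ to general $k$ is the benign direction — raising the order of vanishing at the origin — and can be absorbed into $\kappa_k$ by an integration that preserves integrability, so that the essential difficulty lies entirely in the decay matching handled by analyticity.
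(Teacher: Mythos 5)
Your reduction to Theorem~\ref{MTX} has a genuine gap, and it sits exactly at the step you label (b). The comparison of the two square functions must be \emph{pointwise in} $\omega\in\O$: to run Minkowski plus a Schur/Hardy test inside $\big(\int_0^\8\|\cdot\|_X^q\,\frac{dt}t\big)^{1/q}$ and only afterwards take $L_p(\O)$-norms, the subordination kernel $\kappa_k(t,s)$ must be \emph{scalar}. You correctly observe that no integrable scalar kernel on $(0,\8)$ exists (the Laplace-transform obstruction). But deforming the contour does not repair this: it produces an identity $t^k\partial^k T_tf=\int_\Gamma\kappa_k(t,z)\,(z\partial P_zf)\,\frac{dz}{z}$ with $\Gamma$ inside the sector of holomorphy, and after Minkowski/Schur you are left with a square function of the \emph{complex-time} data $\|z\partial P_zf(\omega)\|_X$, $z\in\Gamma$. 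Theorem~\ref{MTX} says nothing about complex times, and the same Laplace-transform obstruction shows that $z\partial P_z$ cannot be resynthesized from the real-time operators $s\partial P_s$ by a scalar kernel either, so you cannot get back. What analyticity of the semigroup on $L_p(\O;X)$ actually provides is \emph{operator-norm} information, $\|z\partial P_z\|_{L_p(\O;X)\to L_p(\O;X)}\les1$ on the sector, and operator-norm bounds cannot be inserted inside the pointwise-in-$\omega$, $L_q(dt/t)$ integral --- except when $p=q$, where Fubini identifies the $L_q(\O)$-norm of the pointwise square function with the norm square function $\big(\int_0^\8\|\cdot\|_{L_q(\O;X)}^q\,\frac{dt}t\big)^{1/q}$. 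So, read charitably (via the McIntosh--Yagi comparison of $H^\8_0$-functions of the generator, which is the rigorous form of your complex-kernel Schur estimate; cf. Lemma~\ref{Mac}, which the paper uses in the discrete case), your scheme can deliver only the case $p=q$. The passage to all $1<p<\8$ is then the real analytic core and is not routine: in the paper it is Lemmas~\ref{average} through \ref{p=q2} plus Stein's complex interpolation with the fractional averages $\mathrm M^\a_t$, and it is needed precisely because no pointwise comparison exists when $p\neq q$.

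Two further remarks. First, even at $p=q$ the paper does not reduce to Theorem~\ref{MTX}: it never touches the Poisson semigroup, but instead combines the Hyt\"onen--Naor inequality for the differences $(T_t-T_{3t})f$ (proved from Rota's dilation plus martingale cotype), the telescoping identity $\partial T_tf=\sum_k\partial T_{2^kt}(T_{2^kt}-T_{3\cdot2^kt})f$, and Pisier's bound $\sup_{t>0}\|t\partial T_t\|\les1$ on $L_q(\O;X)$; this dilation-based ingredient is what your plan tries to bypass. Second, your duality sketch for (ii) needs repair as stated: pairing $g-\mathsf F(g)$ against the reproducing identity and applying H\"older in $t$ directly diverges, since $g-\mathsf F(g)$ is constant in $t$; one must first split $T_t=T_{t/2}T_{t/2}$ and use selfadjointness to place $k$ derivatives on each of $f$ and $g$, which is exactly what the paper's polarized $L_2$-identity encodes. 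That part is fixable; the $p\neq q$ gap above is the essential one.
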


\begin{rk}
Applied to the heat semigroup $\{H_t\}_{t>0}$ of $\real^n$, the above theorem implies a dimension free estimate for the $g$-function associated to $\{H_t\}_{t>0}$:
  $$\left\|\left(\int_0^\8\big\|t \partial H_t f\big\|_X^q\,\frac{dt}t\right)^{1/q}\right\|_{L_p(\real^n)}\le C\, \big\|f\big\|_{L_p(\real^n; X)}\,,\quad\forall\, f\in L_p(\real^n; X)$$
when $X$ is of martingale cotype $q$. Compare this with \cite[Theorem~17]{HN} (and the paragraph thereafter).
 \end{rk}

 \begin{rk}
Theorem~\ref{heat} allows one to improve some recent results of Hong and Ma on vector-valued variational inequalities associated to symmetric diffusion semigroups. For instance, using it, one can extend \cite[Theorem~5.2]{hm2} to any  Banach space $X$  of martingale cotype $q_0$. See also \cite{hm1} for related results in the Banach lattice case.
 \end{rk}

Theorem~\ref{heat}  admits a discrete analogue.  First recall that a power bounded operator $R$ on a Banach space $Y$ is said to be {\it analytic} if
 $$\sup_{n\ge1}n\big\|R^n(R-1)\big\|<\8,$$
where the norm is the operator norm on $Y$. It is known that the analyticity of $R$ is equivalent to
 $$
 \sup_{z\in\com, |z|>1}|1-z|\,\big\|(z-R)^{-1}\big\|<\8.
 $$
Moreover, if $R$ is analytic, its spectrum $\s(R)$ is contained in  $\overline{B_{\g}}$ for some $0<\g<\pi/2$, where $B_{\g}$  denotes  the Stolz domain  which is the interior of the convex hull of $1$ and the disc $D(0,\,\sin\g)$ (see figure~1). We refer to \cite{b, N} for more information.

\begin{figure}[ht]
\vspace*{2ex}
\begin{center}
\includegraphics[scale=0.4]{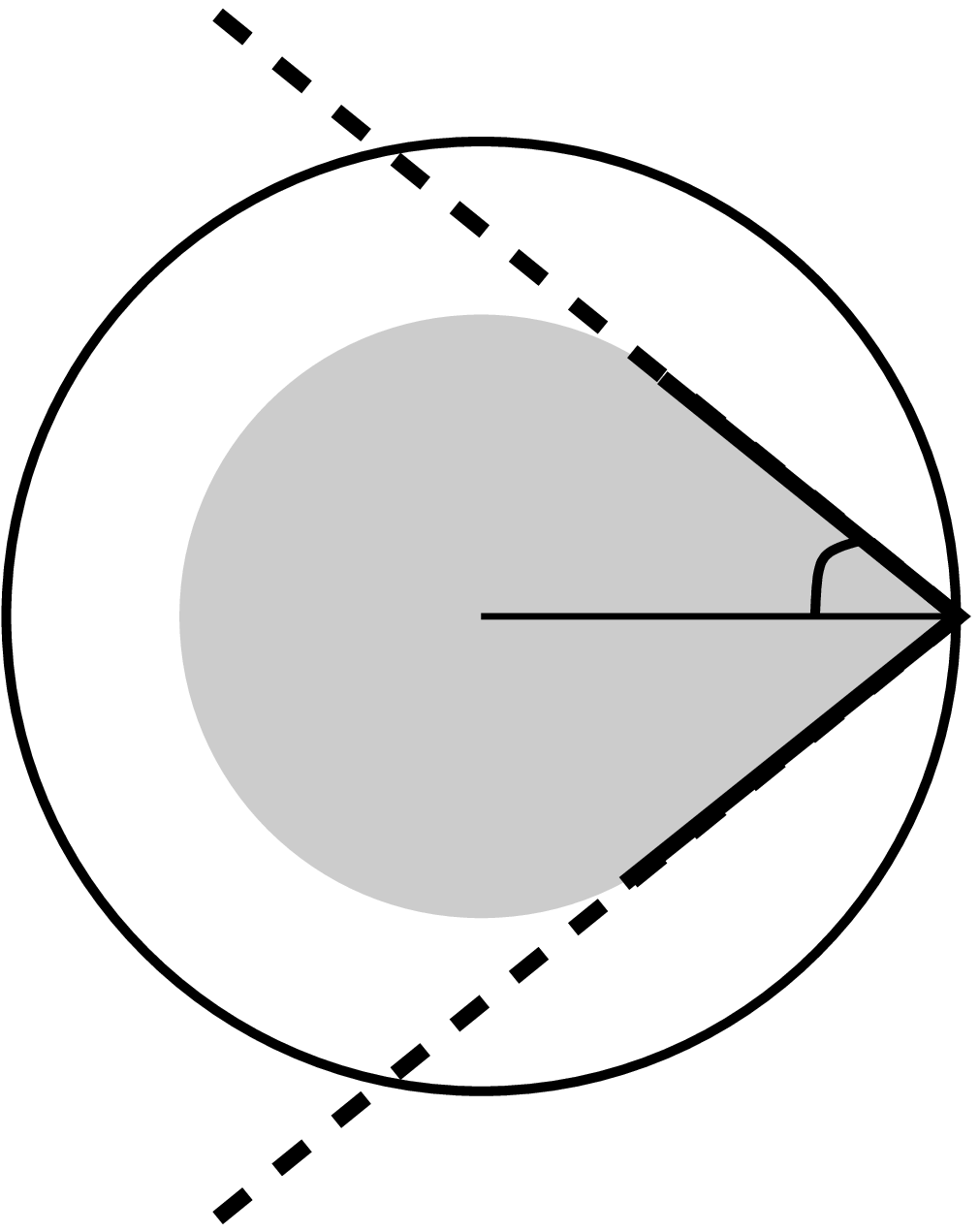}
\begin{picture}(0,0)
\put(-2,65){{\footnotesize $1$}}
\put(-68,65){{\footnotesize $0$}}
\put(-32,81){{\footnotesize $\gamma$}}
\put(-55,85){{\small $B_\gamma$}}
\end{picture}
\end{center}
\caption{\label{f1}}
 \end{figure}

Now consider  a \emph{symmetric Markovian operator} $T$ on $(\O,\A,\mu)$, that is, $T$ satisfies the following conditions:
\begin{enumerate}[$\bullet$]
 \item $T$ is a linear contraction on $L_p(\O)$ for every $1\le p\le \8$;
 \item $T$ is positivity preserving and $T1=1$;
 \item $T$ is  a selfadjoint operator on $L_2(\O)$.
\end{enumerate}
With a slight abuse of notation, we use again $\mathsf F$  to denote the projection on the fixed point subspace of $T$. Both $T$ and $\mathsf F$ extend to contractions on $L_p(\O; X)$ for any Banach space $X$.  In the following two theorems, $T=S^2$ with $S$ a symmetric Markovian operator, so $T$ is a symmetric Markovian operator too, The following is the discrete analogue of a theorem of Pisier \cite{pis2} for semigroups.

 \begin{thm}\label{analyticity-discrete}
  Let $T=S^2$ with $S$ a symmetric Markovian operator, $1<p<\8$ and $X$ be a uniformly convex Banach space. Then the extension of $T$ to $L_p(\O; X)$  is analytic. More precisely, there exist constants $C$ and $\g\in(0,\,\pi/2)$, depending only on $p$ and the modulus of uniform convexity of $X$, such that
  \beq\label{resolvent}
  \s(T)\subset \overline{B_{\g}} \;\text{ and }\; \big\|(z-T)^{-1}\big\|\le \frac{C}{|1-z|}\,,\quad\forall z\in\com\setminus \overline{B_{\g}}.
  \eeq
  \end{thm}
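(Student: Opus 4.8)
The plan is to establish analyticity in the equivalent resolvent form quoted in the text, namely $\sup_{|z|>1}|1-z|\,\big\|(z-T)^{-1}\big\|_{\B(L_p(\O;X))}<\8$, and then to read off both the spectral inclusion $\s(T)\subset\overline{B_\g}$ and the full bound \eqref{resolvent} from the general theory relating analyticity of a power-bounded operator to its behaviour off a Stolz domain (\cite{b,N}). Since $T$ is a positive contraction on every $L_p(\O;X)$ it is power-bounded there, and since $T=S^2$ is selfadjoint with $0\le T\le I$ on $L_2(\O)$ its $L_2$-spectrum already lies in $[0,1]$; the whole content is thus the uniform $L_p(\O;X)$-resolvent bound for $|z|>1$. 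Expanding $(z-T)^{-1}=\sum_{n\ge0}z^{-n-1}T^n$ and summing by parts (telescoping $T^n-\mathsf F=-\sum_{m\ge n}(T^{m+1}-T^m)$, valid because $T^mf\to\mathsf Ff$ on the reflexive space $L_p(\O;X)$) I would peel off the harmless contributions $\frac1{z-1}\mathsf Ff$ and $\sum_{m\ge0}(T^{m+1}-T^m)f=\mathsf Ff-f$, reducing everything to the single uniform estimate
\beq\label{mgtransform}
\Big\|\sum_{m\ge0}w^{m+1}(T^{m+1}-T^m)f\Big\|_{L_p(\O;X)}\le C\,\|f\|_{L_p(\O;X)},\qquad w=1/z,\ |w|<1.
\eeq

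Next I would bring in the martingale structure through Rota's dilation theorem for the symmetric Markovian operator $S$: there exist a larger probability space, a decreasing sequence $(\E_m)$ of conditional expectations on it, and a conditional expectation $\E_0$ onto the copy of $L_p(\O)$, such that $T^m=S^{2m}=\E_0\E_m$ for every $m$. Writing $g$ for the canonical lift of $f$ to the enlarged space and $d_{m+1}=(\E_m-\E_{m+1})g$ for the difference sequence of the reversed martingale $(\E_m g)_{m\ge0}$, one gets $(T^{m+1}-T^m)f=-\E_0 d_{m+1}$; since $\E_0$ is a contraction on $L_p(\wh\O;X)$, \eqref{mgtransform} follows once the reversed-martingale transform $\sum_m w^{m+1}d_{m+1}g$ is bounded on $L_p(\wh\O;X)$ uniformly in $|w|<1$. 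This is the point at which uniform convexity must be used: $X$ is super-reflexive, hence (after renorming, and simultaneously with $X^*$) of some finite martingale cotype and martingale type, so that both the upper (cotype) and lower (type) square-function inequalities are at one's disposal for $X$- and $X^*$-valued martingales.

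The genuinely hard part is the uniformity of this martingale-transform bound as $|w|\to1$. For $w$ restricted to a non-tangential approach region to the unit circle the estimate already follows from power-boundedness alone, via the crude Abel bound in which the factor $|1-w|/(1-|w|)$ stays bounded; but the complement of $\overline{B_\g}$ unavoidably forces $z\to1$, equivalently $w\to1$, along tangential (essentially imaginary) directions, and there the mere size $|w|^{m+1}$ of the multiplier is useless — any estimate resting only on $|w|^{m+1}$ together with the square functions blows up like a negative power of $1-|w|$. One must instead exploit the cancellation carried by the phase of the \emph{analytic} multiplier $w^{m+1}$, and I expect this tangential estimate to be the main obstacle. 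It is precisely here that the device borrowed from Hyt\"onen--Naor should enter: pairing $\sum_m w^{m+1}d_{m+1}g$ against a dual $X^*$-valued martingale, using the selfadjointness of $T$ so as to have the dual square-function inequality available, and combining the two square functions with the oscillation of $w^{m+1}$ through an analytic-family/interpolation argument in $w$ to absorb the would-be blow-up. Once \eqref{mgtransform} is proved uniformly for $|w|<1$, retracing the reduction yields $\sup_{|z|>1}|1-z|\,\big\|(z-T)^{-1}\big\|<\8$; hence $T$ is analytic on $L_p(\O;X)$ and \eqref{resolvent} holds with $\g\in(0,\pi/2)$ and $C$ depending only on $p$ and the modulus of uniform convexity of $X$ (through the martingale type and cotype constants entering the square-function inequalities).
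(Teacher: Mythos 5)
Your argument, as written, contains a genuine gap at its center. The Abel summation identity you set up gives, for $|z|>1$ and $w=1/z$,
\begin{equation*}
\sum_{m\ge 0}w^{m+1}\big(T^{m+1}-T^m\big)f \;=\; (z-1)(z-T)^{-1}f - f,
\end{equation*}
so your key estimate (the uniform bound on the martingale transform for $|w|<1$) is not a reduction of the theorem but is \emph{literally equivalent} to the resolvent bound $\sup_{|z|>1}|1-z|\,\|(z-T)^{-1}\|<\8$ you are trying to prove; all of the work remains after the reformulation. And that work is never done: you explicitly leave the tangential regime $w\to 1$ as an expected ``main obstacle,'' with only a gesture toward a Hyt\"onen--Naor-type pairing plus square functions plus interpolation in $w$. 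That gesture cannot succeed in the form described. Martingale type/cotype (square-function) inequalities only see the moduli $|w|^{m+1}\|d_{m+1}\|_X$, which, as you yourself observe, lose a power of $(1-|w|)^{-1}$ exactly where it hurts; and the duality pairing $\sum_m w^{m+1}\int\la d_{m+1},e_{m+1}\ra$, estimated by H\"older in $m$, requires a martingale cotype $q$ inequality for $X$ \emph{and} a martingale cotype $q'$ inequality for $X^*$, which forces $q=q'=2$ and hence (by Kwapie\'n) $X$ essentially Hilbertian. So for a general uniformly convex $X$ no combination of square functions and absolute values closes the argument; the phase cancellation in $w^{m+1}$ must be exploited by some other mechanism, and the proposal does not supply one. (A secondary slip: the strong convergence $T^mf\to\mathsf F f$ on $L_p(\O;X)$ does not follow from reflexivity --- a rotation on a Hilbert space is a counterexample --- though it does hold here because $T=S^2\ge 0$ on $L_2(\O)$, by the spectral theorem, interpolation and density.)

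For comparison, the paper's proof avoids the resolvent expansion entirely, and this is precisely how it sidesteps the tangential difficulty. Rota's \emph{one-step} dilation (Lemma~\ref{Rota}), $T=\mathbb{E}_\A\mathbb{E}_\B\big|_Y$, combined with Pisier's power-type-$q$ renorming \eqref{conx} of $Y=L_p(\O;X)$, yields in a few lines (Lemma~\ref{spectral estimate}) both the norm estimate $\|1-T\|\le\min\big(\frac32,\,2(1-\d 2^{-q})^{1/q}\big)<2$ and the quantitative spectral inclusion $\s(T)\subset\overline{B_{\g}}$, with $\g$ depending only on $\d$ and $q$. The same norm estimate applied to each operator $e^{t(T-1)}=\big(e^{\frac t2(T-1)}\big)^2$ --- again a square of a symmetric Markovian operator --- gives, via Kato's criterion, analyticity of the semigroup $\{e^{t(T-1)}\}_{t>0}$ on $Y$ (Lemma~\ref{analyticity0}); then the Blunck--Nevanlinna characterization (Lemma~\ref{analyticity}: power bounded $+$ analytic semigroup $+$ $\s(T)\subset\mathbb{D}\cup\{1\}$ implies $T$ analytic) delivers \eqref{resolvent}. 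If you want to keep your scheme, the missing tangential estimate is exactly what these spectral and functional-calculus arguments provide; proving it directly at the level of reversed-martingale transforms is not known to work and is, in the equivalence above, the whole theorem.
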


The discrete analogue of Theorem~\ref{heat}  is the following

 \begin{thm}\label{discrete}
  Let $T=S^2$ be as above and $1<p<\8$.
 \begin{enumerate}[\rm(i)]
 \item If $X$ is of martingale cotype $q$ with $2\le q<\8$, then
  $$\left\|\left(\sum_{n=1}^\8 n^{q-1}\big\|(T^n -T^{n-1}) f\big\|_X^q \right)^{1/q}\right\|_{L_p(\O)}\le C\, \big\|f\big\|_{L_p(\O; X)}\,,\quad\forall\, f\in L_p(\O; X),$$
where the constant $C$ depends only on $p,q$ and the martingale cotpye $q$ constant of $X$.
 \item If $X$ is of martingale type $q$ with $1< q\le2$, then
  $$\big\|f\big\|_{L_p(\O; X)}\le C \left\|\left(\big\|\mathsf Ff\big\|_X^q+\sum_{n=1}^\8n^{q-1}\big\|(T^n -T^{n-1}) f\big\|_X^q \right)^{1/q}\right\|_{L_p(\O)}\,,\quad\forall\, f\in L_p(\O; X),$$
  where the constant $C$ depends only on $p,q$ and the martingale tpye $q$ constant of $X$.
   \end{enumerate}
\end{thm}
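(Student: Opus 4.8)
The plan is to manufacture, out of the single operator $T$, a genuine symmetric diffusion semigroup and then to read off both estimates from the continuous Theorem~\ref{heat}, using the analyticity of Theorem~\ref{analyticity-discrete} at the one point where it is unavoidable. Since $T=S^2$, its $L_2$-spectrum lies in $[0,1]$, so $A={\rm Id}-T$ is nonnegative and
\[
T_t:=e^{-tA}=e^{-t}\sum_{k\ge0}\frac{t^k}{k!}\,T^k
\]
is, for each $t>0$, a positive, unital, selfadjoint $L_p$-contraction; hence $\{T_t\}_{t>0}$ is a symmetric diffusion semigroup and Theorem~\ref{heat} applies to it. Writing $a_n:=(T^n-T^{n-1})f$, the bridge between the two worlds is the elementary identity
\[
\partial T_tf=(T-{\rm Id})e^{-t({\rm Id}-T)}f=\sum_{m\ge0}\frac{e^{-t}t^m}{m!}\,a_{m+1},
\]
which exhibits $\partial T_tf$ as the Poisson average, of mean $t$, of the discrete differences $a_{m+1}$. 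Everything reduces to comparing the discrete weighted square function $\big(\sum_n n^{q-1}\|a_n\|_X^q\big)^{1/q}$ with the continuous one $\big(\int_0^\8\|t\,\partial T_tf\|_X^q\,\frac{dt}t\big)^{1/q}$.

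I would treat part (ii) first, as it is the easy direction. By the triangle inequality and convexity of $s\mapsto s^q$, and since the Poisson weights sum to one, the identity above gives pointwise on $\O$
\[
\int_0^\8 t^{q-1}\big\|\partial T_tf\big\|_X^q\,dt\le\sum_{m\ge0}\|a_{m+1}\|_X^q\int_0^\8\frac{e^{-t}t^{m+q-1}}{m!}\,dt=\sum_{m\ge0}\frac{\Gamma(m+q)}{m!}\,\|a_{m+1}\|_X^q\les\sum_{n\ge1}n^{q-1}\|a_n\|_X^q,
\]
using $\Gamma(m+q)/\Gamma(m+1)\les(m+1)^{q-1}$. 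Taking $q$-th roots and $L_p(\O)$ norms, the continuous square function of $\{T_t\}$ is dominated by the discrete one; inserting this into the lower estimate of Theorem~\ref{heat}(ii) and absorbing $\|\mathsf F f\|_X$ into the bracket yields part (ii). No analyticity is needed here.

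For part (i) the required inequality runs in the opposite direction, $\sum_n n^{q-1}\|a_n\|_X^q\les\int_0^\8\|t\,\partial T_tf\|_X^q\,\frac{dt}t$ in $L_p(\O;\ell_q(X))$, and this is where the real work lies: passing from the smoothed (continuous) square function back to the unsmoothed (discrete) one loses information unless the sequence $(a_n)$ varies slowly, which is precisely what analyticity provides. Since $X$ has martingale cotype $q$ it is super-reflexive, hence carries an equivalent uniformly convex norm; sectoriality and the resolvent bound \eqref{resolvent} are invariant under such a renorming, so Theorem~\ref{analyticity-discrete} gives that $T$ is analytic on $L_p(\O;X)$, whence $\sup_n n\|T^{n-1}(T-{\rm Id})\|<\8$ and, iterating \eqref{resolvent}, $\sup_n n^2\|T^{n-1}(T-{\rm Id})^2\|<\8$. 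The plan is to use these bounds to show that the Poisson average $\partial T_tf$ at $t=n$ differs from $a_n$ only by a term whose weighted $\ell_q(X)$-square function is controlled by $\|f\|_{L_p(\O;X)}$ — the Poisson law of mean $n$ concentrates on a window of width $\sim\sqrt n$ about $m\approx n$, across which the second-order estimate forces $a_{m+1}\approx a_n$ — and, similarly, that replacing the Riemann sum over integers by the integral costs only a controlled error because $t\mapsto\partial T_tf$ is itself slowly varying. Feeding the resulting comparison into Theorem~\ref{heat}(i) then gives part (i). This de-Poissonization, carried out at the level of the $L_p(\O;\ell_q(X))$ norm rather than for scalars, is the main obstacle, and it is exactly where the idea borrowed from Hyt\"onen--Naor is brought to bear.
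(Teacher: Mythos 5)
Your part (ii) is correct, and it takes a genuinely different route from the paper. You derive (ii) directly from Theorem~\ref{heat}(ii) applied to the semigroup $T_t=e^{t(T-1)}$, using Jensen's inequality on the Poisson weights to get the pointwise domination
\[
\int_0^\8\big\|t\partial T_tf\big\|_X^q\,\frac{dt}t\les\sum_{n\ge1}n^{q-1}\big\|(T^n-T^{n-1})f\big\|_X^q
\quad\text{on }\O,
\]
together with $\Gamma(m+q)/\Gamma(m+1)\les (m+1)^{q-1}$. This is elementary, needs no analyticity, and—unlike the paper, which deduces (ii) from (i) by a duality/polarization argument using the spectral identity $\|f-\mathsf F(f)\|_{L_2}^2=\sum_n n\|T^{n-1}(1-T^2)f\|_{L_2}^2$—your (ii) is independent of (i). That is a real (small) gain.

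Part (i), however, has a genuine gap: the ``de-Poissonization'' you describe is not an implementation detail, it is the entire hard content of the theorem, and you have left it as a plan. Two concrete obstructions. First, the comparison you need runs against the smoothing: it is false pointwise on $\O$, so the error terms must be controlled in $L_p(\O;\ell_q(X))$, whereas the analyticity bounds you invoke ($\sup_n n\|T^{n-1}(T-1)\|<\8$, $\sup_n n^2\|T^{n-1}(T-1)^2\|<\8$) are operator-norm bounds on $L_p(\O;X)$; they naturally yield error estimates in $\ell_q\big(L_p(\O;X)\big)$ or after an $\ell_1$-summation, and for $p<q$ there is no Minkowski inequality taking you from there to $L_p(\O;\ell_q(X))$. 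Second, the quantitative decay you can extract from the window argument is insufficient: the first-order estimate $\|a_{m+1}-a_n\|\les|m-n|\,n^{-2}\|f\|$ over the Poisson window $|m-n|\les\sqrt n$ gives an error $\les n^{-3/2}\|f\|$ at scale $n$, and the weighted $\ell_1$-sum $\sum_n n^{1-1/q}\,n^{-3/2}$ diverges for every $q\ge2$. To repair this one needs a second-order expansion exploiting the cancellation $\sum_m e^{-n}n^m(m-n)/m!=0$ together with third-order bounds $\sup_n n^3\|T^n(T-1)^3\|<\8$; none of this is in your text, and even then one would only comfortably get the case $p=q$ (where $L_q(\O;\ell_q(X))=\ell_q(L_q(\O;X))$ and the bookkeeping trivializes), after which a discrete version of Stein's complex interpolation is still required to reach general $p$---you cannot simply ``feed the comparison into Theorem~\ref{heat}(i)''.

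For comparison, the paper's proof of (i) establishes exactly the $p=q$ inequality \eqref{p=q: discrete} first, but by a different mechanism that avoids all of the above: the contour representation $n^{1/q'}T^n(1-T)=\f_n(A)$ with $A=1-T$, $\f_n(z)=n^{1/q'}z(1-z)^n$, integrated over the boundary of $1-B_\theta$ (this is where the quantitative resolvent bound \eqref{resolvent} of Theorem~\ref{analyticity-discrete} enters, and it is what keeps the constants independent of $T$), then the McIntosh--Yagi square-function comparison (Lemma~\ref{Mac}) to pass from $\f_\e(tA)$ to $\psi(tA)=tAe^{-tA}=-t\partial T_t$, and finally the continuous $p=q$ result (Lemma~\ref{p=q}). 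General $p$ is then obtained by Stein's interpolation machinery, as in Section~\ref{pf-heat}. Your Poisson-concentration idea may well be workable, but as written the key estimate is missing, and it is precisely the step where the theorem lives.
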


\begin{rk}
  If the inequality in Theorem~\ref{discrete} (i)  holds for every positive symmetric Markovian operator $T$,  then the corresponding inequality of Theorem~\ref{MTX} holds for every  subordinated Poisson semigroup $\{P_t\}_{t>0}$. Thus $X$ is of martingale cotype $q$.  Therefore, the validity of the inequality in Theorem~\ref{discrete} (i)  characterizes the martingale cotype $q$ of $X$. A similar remark applies to  part (ii).
\end{rk}

\begin{rk}
  It is worth to note that all constants involved in the preceding  theorems are independent of  the semigroup $\{T_t\}_{t>0}$  or contraction $T$ in consideration. They depend only on the indices $p, q$ and the relevant geometric constants of the space $X$.
\end{rk}

The preceding three theorems will be proved in the next three sections. The proofs of Theorem~\ref{heat} and Theorem~\ref{discrete} follow the same pattern although the latter one  is  more involved. The last section contains some open problems.

\smallskip

We will use the symbol $\les$ to denote an inequality up to a constant factor; all constants will depend only on $X$, $p, q$,  etc. but never on the function $f$ in consideration.

%%%%%%%%%%%%%%%%%%%%%%%%%%%%%%%%%%%%%%%%%%%%%%%%%%%%%%
%%%%%%%%%%%%%%%%%%%%%%%%%%%%%%%%%%%%%%%%%%%%%%%%%%%%%%

\section{A spectral estimate}

%%%%%%%%%%%%%%%%%%%%%%%%%%%%%%%%%%%%%%%%%%%%%%%%%%%%%%
%%%%%%%%%%%%%%%%%%%%%%%%%%%%%%%%%%%%%%%%%%%%%%%%%%%%%%

This section contains a spectral estimate for positive symmetric Markovian operators. Let $X$ be a uniformly convex Banach space and $1<p<\8$. Then $Y=L_p(\O; X)$ is uniformly convex too. By Pisier's renorming theorem \cite{pis1}, we can assume that $Y$ is  uniformly convex of power type $q$ for some $2\le q<\8$, namely,
 \beq\label{conx}
 \left\|\frac{x+y}2\right\|^q+\d\,\left\|\frac{x-y}2\right\|^q\le \frac12\big(\|x\|^q+\|y\|^q\big),\quad \forall\, x, y\in Y
 \eeq
for some positive constant $\d$. Note that the above inequality implies the martingale cotype $q$ of $Y$. Conversely, if $Y$ is of martingale cotype $q$, then it admits an equivalent norm which satisfies \eqref{conx}. Let $T=S^2$ with $S$ a symmetric Markovian operator on $(\O, \A, \mu)$. We extend $T$ to a contraction on $Y$, still denoted by $T$. In the following the norm and spectrum of $T$ is taken for $T$ viewed as an operator on $Y$.

\begin{lem}\label{spectral estimate}
 Under the above assumptions we have
 \begin{enumerate}[\rm(i)]
 \item $\displaystyle\|1-T\|\le \min\big(\frac32,\, 2(1-\frac{\d}{2^q})^{1/q}\big)<2$;
 \item the spectrum of $T$ is contained in a Stolz domain $\overline{B_{\g}}$ for some $\g\in(0,\, \pi/2)$ depending only on $\d$ and $q$ in \eqref{conx}.
 \end{enumerate}
  \end{lem}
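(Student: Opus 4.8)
The plan is to read both parts off the power-type convexity inequality \eqref{conx}, which I first put into the symmetric form
\beqn
\|a\|^q + \delta\,\|b\|^q \le \tfrac12\big(\|a+b\|^q + \|a-b\|^q\big),\qquad a,b\in Y,
\eeqn
obtained by setting $a=\frac{x+y}2,\ b=\frac{x-y}2$.

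For part (i) I would apply this with $a+b=f$ and $a-b=-Tf$, so that $a=\frac{(1-T)f}2$ and $b=\frac{(1+T)f}2$. Since $T$ is a contraction on $Y$, the right-hand side is $\le\|f\|^q$, giving
\beqn
\Big\|\tfrac{(1-T)f}2\Big\|^q + \delta\,\Big\|\tfrac{(1+T)f}2\Big\|^q \le \|f\|^q .
\eeqn
The trivial estimate $\|(1-T)f\|\le 2\|f\|$ is immediate; the whole point is to beat it, and this is exactly where the hypothesis $T=S^2$ must enter. The square structure forces $T\ge0$ as a selfadjoint operator on $L_2(\O)$, which is precisely what rules out the rotation-type behaviour that would otherwise make $\|1-T\|=2$. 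Feeding a lower bound for the auxiliary term $\|(1+T)f\|$ into the displayed inequality turns the $\delta$-term into a genuine gain and produces the convexity bound $2(1-\delta/2^q)^{1/q}$; a separate, more elementary estimate using only the contractivity of $S$ and the factorisation $1-S^2=(1-S)(1+S)$ yields the space-free bound $\tfrac32$. Taking the smaller of the two gives (i), and both are $<2$.

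For part (ii) the crucial observation is that the bound in (i) is uniform over all operators of the prescribed type, and that $T^n=(S^n)^2$ with $S^n$ again a symmetric Markovian operator. Hence (i) applies verbatim to each $T^n$ and gives, with $r:=\|1-T\|<2$, that $\|1-T^n\|\le r$ for every $n\ge1$. By the spectral mapping theorem $\sigma(T^n)=\sigma(T)^n$, so $\sigma(T)^n\subseteq\overline{D(1,r)}$, i.e. $|\lambda^n-1|\le r$ for every $\lambda\in\sigma(T)$ and every $n\ge1$ (recall $|\lambda|\le1$ because $T$ is a contraction). Writing $\lambda=\rho e^{i\theta}$ with $0<\theta\le\pi$ and choosing an integer $n$ of order $\pi/\theta$, so that $n\theta$ is close to $\pi$ and $\cos(n\theta)\approx-1$, the bound $|\lambda^n-1|\le r$ becomes $(\rho^n+1)\lesssim r$, whence $\rho^n$ stays bounded away from $1$ by a margin depending only on $r$. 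After inserting $n\approx\pi/\theta$ this forces $1-\rho\gtrsim\theta$ (with implied constant depending on $r$), which is exactly the tangential condition defining a Stolz domain at the vertex $1$. Therefore $\sigma(T)\subset\overline{B_\gamma}$ with an aperture $\gamma$ depending only on $r$, hence only on $\delta$ and $q$.

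The main obstacle is part (i): everything downstream needs only the strict inequality $\|1-T\|<2$, but getting below the trivial constant $2$ is the crux. It cannot come from contractivity alone — an isometry $S$ with $S^2=-{\rm Id}$ (available already on a Hilbert space, which satisfies \eqref{conx} with $q=2$) gives $\|1-S^2\|=2$ — so the argument must genuinely combine the uniform convexity \eqref{conx} with the positivity and selfadjointness encoded in $T=S^2$. Producing the correct lower bound for the auxiliary term $\|(1+T)f\|$, equivalently controlling $(1+T)^{-1}$ on $Y$, is the delicate point.
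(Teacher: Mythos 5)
Your part (i) is not yet a proof: the step you defer is exactly the content of the lemma. After putting \eqref{conx} in symmetric form and applying it to $a=\frac{(1-T)f}{2}$, $b=\frac{(1+T)f}{2}$, everything hinges on a lower bound for the auxiliary term $\|(1+T)f\|$, and you explicitly leave this open ("the delicate point"). Positivity and selfadjointness of $T$ on $L_2(\O)$ cannot simply be "fed" into an inequality about the operator norm on $Y=L_p(\O;X)$ --- they are statements about a different space --- and your claim that contractivity of $S$ plus the factorisation $1-S^2=(1-S)(1+S)$ gives the bound $\frac32$ is false (it gives only $\|1-T\|\le4$, and your own example $S^2=-{\rm Id}$ in the final paragraph shows contractivity alone cannot even beat $2$; the two paragraphs contradict each other). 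The missing idea is Rota's dilation theorem (Lemma~\ref{Rota}): $T=\mathbb{E}_\A\mathbb{E}_\B\big|_Y$, where $P=\mathbb{E}_\B$ is a contractive \emph{projection} on the larger space and $\mathbb{E}_\A$ is a contraction. The paper applies \eqref{conx} to the pair $(\l y,Py)$ for a unit vector $y$, and the projection identity $P(\l y-Py)=(\l-1)Py$ produces precisely the lower bound you lack: $\|\l y-Py\|\ge|1-\l|\,\|Py\|\ge|1-\l|\big(\|\l y+Py\|-|\l|\big)\ge|1-\l|\big(\|\l y+Ty\|-|\l|\big)$, the last step because $\mathbb{E}_\A(\l y+Py)=\l y+Ty$ and $\mathbb{E}_\A$ is contractive. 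This yields the quantitative inequality \eqref{spectral} for all $\l\in\com$; taking $\l=-1$ gives (i). Without the dilation, your scheme has no mechanism to exploit $T=S^2$ at all.

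Your part (ii) takes a genuinely different route from the paper: the paper evaluates the same inequality \eqref{spectral} at $\l\in\s(T)$, using spectral mapping only in the form $\|\l+T\|\ge2|\l|$, whereas you apply (i) uniformly to all powers $T^n=(S^n)^2$ and use $\s(T^n)=\s(T)^n$. The idea is attractive (the uniformity of the constant over all symmetric Markovian operators is correctly identified as the key), but as written it has two defects. First, it rests entirely on the unproven (i). Second, the step "choose $n$ of order $\pi/\theta$ so that $n\theta$ is close to $\pi$" is impossible for angles such as $\theta=2\pi/3$: the multiples of $2\pi/3$ modulo $2\pi$ are $\{2\pi/3,\,4\pi/3,\,0\}$ and never approach $\pi$. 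The argument can be repaired by demanding only $\cos(n\theta)\le-\frac12$ (always achievable for some $n$), which gives $1+\rho^n+\rho^{2n}\le r^2$ with $r=\sup_n\|1-T^n\|$; but this forces $\rho^n$ away from $1$ only when $r<\sqrt3$. From the qualitative bound $r<2$ alone the conclusion is simply false: $\l=e^{2\pi{\rm i}/3}$ satisfies $|1-\l^n|\le\sqrt3$ for every $n\ge1$, yet it lies on the unit circle and in no Stolz domain. So your reduction genuinely needs the quantitative bound $\|1-T^n\|\le\frac32$ from (i) --- one more reason why (i), the part you skipped, is the crux.
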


Part (i) above is already contained in \cite{pis2} (see, in particular, Remark~1.8 there). In fact, our proof below is modeled on that of  \cite[Lemma~1.5]{pis2}.  As in \cite{pis2}, We will need the following one step version of Rota's dilation theorem for positive symmetric Markovian operators. We refer to \cite[Chapter~IV]{stein} for its proof as well as its full version.

\begin{lem}[Rota]\label{Rota}
 Let $T=S^2$ with $S$ a  symmetric Markovian operator on $(\O, \A, \mu)$. Then there exist a larger measure space $(\wt{\O}, \wt{\A},\wt{\mu})$ containing $(\O, \A, \mu)$, and a $\s$-subalgebra $\B$ of $\wt{\A}$ such that
 $$Tf=\mathbb{E}_\A \mathbb{E}_\B f,\quad\forall f\in L_p(\O, \A, \mu),$$
where $\mathbb{E}_\A$ denotes the conditional expectation relative to $\A$ (and similarly for $\mathbb{E}_\B$).
\end{lem}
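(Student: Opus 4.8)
The plan is to realize $T=S^2$ as a genuine Markov dilation by building, directly from $S$, the reversible joint law of a two-step chain on the product $\O\times\O$; each of the two conditional expectations in $\mathbb{E}_\A\mathbb{E}_\B$ will then contribute exactly one factor of $S$, so that their composition is $S^2=T$ (this is precisely why the hypothesis $T=S^2$ enters). Concretely, I would take $\wt\O=\O\times\O$ with product $\s$-algebra $\wt\A=\A\ot\A$ and define a set function on rectangles of finite measure by
$$\wt\mu(A\times B)=\la\un_A,\,S\un_B\ra=\int_A S\un_B\,d\mu.$$
The first task is to check that this prescription extends to a bona fide measure on $\wt\A$. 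Nonnegativity is immediate from the positivity of $S$ (so that $S\un_B\ge0$), and countable additivity in each variable follows from the continuity of $S$ as a positive contraction on $L_1$ together with monotone convergence; a standard $\s$-finite exhaustion and the Carath\'eodory extension then produce $\wt\mu$.

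Next I would record the three structural properties of $\wt\mu$ that make the dilation work, each coming from one of the defining features of a symmetric Markovian operator. Using $S1=1$ gives the first marginal $\wt\mu(A\times\O)=\la\un_A,S1\ra=\mu(A)$, while selfadjointness together with $S1=1$ gives the second, $\wt\mu(\O\times B)=\la1,S\un_B\ra=\la S1,\un_B\ra=\mu(B)$; thus both coordinate projections $\pi_1,\pi_2\colon\wt\O\to\O$ push $\wt\mu$ forward to $\mu$, so that $f\mapsto f\circ\pi_i$ embeds $L_p(\O,\A,\mu)$ isometrically into $L_p(\wt\O,\wt\A,\wt\mu)$ for each $i$ and every $1\le p\le\8$. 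Selfadjointness also yields the symmetry $\wt\mu(A\times B)=\wt\mu(B\times A)$. I then set $\A_1=\pi_1^{-1}(\A)$ and $\A_2=\pi_2^{-1}(\A)$, identify $\O$ with its copy in the first coordinate, and put $\A=\A_1$ and $\B=\A_2$.

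The key identity is that conditioning a second-coordinate function onto the first coordinate reproduces $S$: for $g\in L_\8(\O)$ I claim
$$\mathbb{E}_{\A_1}(g\circ\pi_2)=(Sg)\circ\pi_1.$$
To see this, the right-hand side is $\A_1$-measurable, and for every $\A_1$-measurable test function $f\circ\pi_1$ the very definition of $\wt\mu$ gives
$$\int_{\wt\O}(f\circ\pi_1)(g\circ\pi_2)\,d\wt\mu=\int_\O f\cdot Sg\,d\mu=\int_{\wt\O}(f\circ\pi_1)\big((Sg)\circ\pi_1\big)\,d\wt\mu,$$
first for indicators and then by linearity and density, which is exactly the defining property of the conditional expectation. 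By the symmetry of $\wt\mu$ the companion formula $\mathbb{E}_{\A_2}(f\circ\pi_1)=(Sf)\circ\pi_2$ holds as well. Composing the two, for $f\in L_p(\O)$ viewed as $f\circ\pi_1$,
$$\mathbb{E}_\A\mathbb{E}_\B(f\circ\pi_1)=\mathbb{E}_{\A_1}\big((Sf)\circ\pi_2\big)=\big(S(Sf)\big)\circ\pi_1=(S^2f)\circ\pi_1=(Tf)\circ\pi_1,$$
which is precisely $Tf=\mathbb{E}_\A\mathbb{E}_\B f$ under the identification of $\O$ with the first coordinate. I expect the only genuine obstacle to be the first step, namely producing the dilating measure $\wt\mu$ from the \emph{abstract} operator $S$ (which need not be given by a kernel) and securing both its countable additivity and the exactness of its marginals in the merely $\s$-finite setting; once $\wt\mu$ is in hand, the marginal, symmetry, and conditional-expectation computations are formal consequences of $S1=1$ and the selfadjointness of $S$.
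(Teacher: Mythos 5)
Your dilation mechanism is exactly right, and it is the natural one-step specialization of the argument the paper points to: the paper gives no internal proof but refers to Stein \cite[Chapter~IV]{stein}, where Rota's theorem is proved in its full version by constructing the law of the entire stationary reversible chain on an infinite product $\O\times\O\times\cdots$ (yielding $\mathbb{E}_\A\mathbb{E}_{\B_n}=T^n$ for all $n$), whereas you build only the two-coordinate joint law needed for one step. Granting the existence of $\wt\mu$, your computations are correct: the marginal and symmetry identities follow from $S1=1$ and selfadjointness, the verification of $\mathbb{E}_{\A_1}(g\circ\pi_2)=(Sg)\circ\pi_1$ against $\A_1$-measurable test functions is the right characterization of conditional expectation (and the $\s$-finiteness of $\wt\mu$ restricted to $\A_1$, which makes $\mathbb{E}_{\A_1}$ well defined, follows from the marginal identity), and composing the two conditionings gives $S^2=T$.

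The genuine gap is the step you flagged and then waved through: the existence of $\wt\mu$. Separate countable additivity of $(A,B)\mapsto\la \un_A, S\un_B\ra$ plus Carath\'eodory is \emph{not} sufficient, because Carath\'eodory requires countable additivity of the premeasure on the algebra of finite disjoint unions of rectangles: if $A\times B=\bigsqcup_n A_n\times B_n$, you must show $\la\un_A,S\un_B\ra=\sum_n\la\un_{A_n},S\un_{B_n}\ra$. The classical Tonelli-style proof fixes $x$, uses $B=\bigsqcup_{n:\,x\in A_n}B_n$, applies a genuine countably additive measure in the second variable, and then integrates in $x$; this needs $S$ to be represented by a transition kernel $s(x,\cdot)$. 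An abstract symmetric Markovian $S$ only yields $S\un_B$ as an a.e.\ equivalence class: for each fixed $x$ the identity $S\un_B=\sum_{n:\,x\in A_n}S\un_{B_n}$ holds a.e.\ in the second variable, but you need it ``on the diagonal,'' a null-set issue that separate countable additivity cannot resolve (positive bimeasures that are separately $\s$-additive need not extend to measures on $\A\ot\A$; positivity easily gives the inequality $\sum_n\la\un_{A_n},S\un_{B_n}\ra\le\la\un_A,S\un_B\ra$, but continuity at $\emptyset$ along the rectangle algebra is precisely where regularity of the measure space enters). The standard repairs are: assume, or reduce to, a situation where $S$ admits a kernel --- automatic on a standard (Lebesgue) space by disintegration, and obtainable in general by transferring through the measure-algebra isomorphism to a perfect measure space, or by restricting to a countably generated $S$-invariant sub-$\s$-algebra, which suffices for the countably many functions appearing in the paper's applications. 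This same extension problem is where Stein's cited proof invests its effort (a Kolmogorov-type extension for the infinite product), so your proposal is structurally sound but incomplete exactly at its self-identified crux.
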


\begin{proof}[Proof of Lemma~\ref{spectral estimate}]
  Rota's dilation  extends to $X$-valued functions:
 $$T=\mathbb{E}_\A \mathbb{E}_\B\big|_Y.$$
Here we have used our usual convention that $\mathbb{E}_\A\ot{\rm Id}_X$ and $\mathbb{E}_\B\ot{\rm Id}_X$ are abbreviated to $\mathbb{E}_\A$ and $\mathbb{E}_\B$, respectively. Thus for any $\l\in\com$ (with $P=\mathbb{E}_\B$)
 $$\l +T=\mathbb{E}_\A(\l +P)\big|_Y.$$
Let $y$ be a unit vector in $Y$. Using \eqref{conx}, we get
 $$\left\|\frac{\l y+Py}2\right\|^q+\d\,\left\|\frac{\l y-Py}2\right\|^q\le \frac12\,(|\l|^q+1).$$
However (noting that $P$ is a contractive projection),
 $$\big\|\l y-Py\big\|\ge |1-\l| \,\big\|Py\big\|\ge |1-\l| \big(\big\|\l y+Py\big\|-|\l|\big)\ge  |1-\l| \big(\big\|\l y+Ty\big\|-|\l|\big) .$$
When $\|\l y+Ty\|$ approaches  $\|\l+T\|$, we then deduce
 \beq\label{spectral}
 \left\|\frac{\l +T}2\right\|^q+\d\, |1-\l|^q\left(\frac{\big\|\l +T\big\|-|\l|}2\right)^q\le \frac12\,(|\l|^q+1).
 \eeq
In particular, for $\l=-1$ we obtain
  $$\|1-T\|^q+\d\, 2^q(\|1-T\|-1)^q\le 2^q,$$
which implies
 $$\|1-T\|\le\min\big(\frac32,\, 2(1-\frac{\d}{2^q})^{1/q}\big).$$
This is part (i). On the other hand, if $\l\in\s(T)$, then \eqref{spectral} yields
  $$|\l|^q+\d \,|1-\l|^q\, |\l|^q\le \frac12\big(|\l|^q+1\big),$$
 whence
  $$|1-\l|\,|\l|\le \left(\frac{q}{2\d}\right)^{1/q}\, (1-|\l|).$$
 The last inequality implies  (in fact, is equivalent to) that $\l\in \overline{B_{\g}}$ for some $\g\in (0,, \pi/2)$ depending only on the constant $\big(q/(2\d)\big)^{1/q}$. The proof of the lemma is thus complete.
\end{proof}

Lemma~\ref{spectral estimate} (i) implies the following result which is \cite[Remark~1.8]{pis2}.

\begin{lem}\label{analyticity0}
Let $X$ and $p$ be as above and $\{T_t\}_{t>0}$ be a symmetric diffusion semigroup  on $(\O, \A, \mu)$. Then the extension of $\{T_t\}_{t>0}$ to $Y=L_p(\O; X)$ is analytic. Consequently, $\{t\partial T_t\}_{t>0}$ is a uniformly bounded family of operators on $Y$, namely,
 \beq\label{pisier}
 \sup_{t>0}\big\|t\partial T_t\big\|\le C,
 \eeq
where $C$ is a constant depending only on $\d$ and $q$ in \eqref{conx}.
\end{lem}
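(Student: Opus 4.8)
The plan is to deduce the analyticity of $\{T_t\}_{t>0}$ on $Y=L_p(\O;X)$ from Lemma~\ref{spectral estimate}, applied not once but simultaneously at every time scale. The crucial observation is that for each $t>0$ one has $T_t=(T_{t/2})^2$, and $T_{t/2}$ is itself a symmetric Markovian operator; hence Lemma~\ref{spectral estimate}(i) applies to $T_t$ and yields $\|1-T_t\|\le\min\big(\frac32,\,2(1-\frac{\d}{2^q})^{1/q}\big)$. Since the right-hand side depends only on $\d$ and $q$, we obtain the \emph{uniform} gap
$$\sup_{t>0}\|1-T_t\|\le c<2,$$
with $c$ depending only on $\d$ and $q$ in \eqref{conx}. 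This uniform estimate is the whole point: it is exactly the quantitative input that forces the negative generator $A$ of $\{T_t\}_{t>0}$ to be sectorial of angle strictly less than $\pi/2$, which is the defining property of a bounded analytic semigroup. The bound \eqref{pisier} is then the standard reformulation of analyticity (see \cite{b, N}), so the two assertions of the lemma are really one.

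To make the passage to \eqref{pisier} explicit I would argue at the level of the resolvent and then descend to the generator by a limiting procedure. Running the computation in the proof of Lemma~\ref{spectral estimate} with $T$ replaced by $T_t$ (legitimate, since Rota's dilation of Lemma~\ref{Rota} is available for $T_t=(T_{t/2})^2$) gives \eqref{spectral} for every $t>0$ and every $\l\in\com$; this places $\s(T_t)$ in a fixed Stolz domain $\overline{B_\g}$ and, more quantitatively, bounds $\|(z-T_t)^{-1}\|$ by $C/|1-z|$ for $z\notin\overline{B_\g}$, with $\g$ and $C$ independent of $t$. In other words each $T_t$ is analytic as a single operator with $t$-independent constants, so that $\sup_n n\|T_t^n(1-T_t)\|=\sup_n n\|T_{nt}(1-T_t)\|\le K$ uniformly in $t$ and $n$. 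Fixing $s>0$ and taking $t=s/n$ gives $\|T_s\,n(1-T_{s/n})\|\le K$; letting $n\to\8$ and using $n(1-T_{s/n})x\to sAx$ for $x\in D(A)$ together with $sAT_s=sT_sA$ on $D(A)$, we conclude $\|sAT_s\|\le K$ on the dense set $D(A)$, hence on all of $Y$. Since $s\partial T_s=-sAT_s$, this is precisely \eqref{pisier}.

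The main obstacle is that the single-operator bound $\|1-T_t\|<2$ is by itself far too weak to give analyticity: a nontrivial rotation (or any unitary close to the identity) satisfies $\|1-R\|<2$ yet is not analytic, and $n\|R^n(R-1)\|$ diverges. What rescues the argument is that the gap below $2$ holds uniformly across all scales $t$, equivalently across the whole one-parameter family, and that this uniform gap is exactly what the dilation-plus-uniform-convexity mechanism of Lemma~\ref{spectral estimate} produces. Thus the delicate point is not any isolated estimate but the fact that the constants in Lemma~\ref{spectral estimate} depend only on the geometry of $X$, through $\d$ and $q$, and never on the operator; the rotation example is excluded precisely because for a rotation semigroup one has $\sup_{t>0}\|1-T_t\|=2$. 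Checking that this $t$-independence survives the limit $n\to\8$, and that the resulting control of $AT_s$ is a genuine operator-norm bound rather than merely a strong-limit statement, is the part that requires care.
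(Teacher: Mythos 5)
Your first step coincides with the paper's: since $T_t=(T_{t/2})^2$ and $T_{t/2}$ is a symmetric Markovian operator, Lemma~\ref{spectral estimate} applies at every scale and gives the uniform gap $\sup_{t>0}\|1-T_t\|\le \min\big(\frac32,\,2(1-\d/2^q)^{1/q}\big)<2$, with constants depending only on $\d$ and $q$. The gap in your argument is the next step. You assert that running the proof of Lemma~\ref{spectral estimate} for $T_t$ yields, ``more quantitatively'', the uniform resolvent bound $\|(z-T_t)^{-1}\|\le C/|1-z|$ off a fixed Stolz domain. Lemma~\ref{spectral estimate} yields no such thing: inequality \eqref{spectral} is an \emph{upper} bound on $\|\l+T\|$, and together with part (ii) it gives only the spectral inclusion $\s(T_t)\subset\overline{B_\g}$ and the norm gap. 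A resolvent estimate requires a uniform \emph{lower} bound on $\|(z-T_t)y\|$ over unit vectors $y$, which the dilation-plus-convexity computation does not produce; and for a single power-bounded operator, spectral inclusion in a Stolz domain (even together with $\|1-T\|<2$) does not imply the Ritt resolvent estimate --- there are power-bounded operators with spectrum $\{1\}$ that are not Ritt. Indeed, the bound you claim is precisely \eqref{resolvent} of Theorem~\ref{analyticity-discrete}, which the paper derives by combining Lemma~\ref{spectral estimate} with the \emph{present} lemma (through Lemma~\ref{analyticity}); so your route either invokes a theorem whose proof depends on the lemma you are proving, or rests on an unproved assertion. Everything downstream (uniform Ritt bounds $\sup_n n\|T_t^n(1-T_t)\|\le K$, then $t=s/n$, $n\to\8$, and closedness of $A$ to upgrade the bound from $D(A)$ to $Y$) is sound \emph{given} that input, but the input is the whole difficulty.

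The repair is short, and it is what the paper does: from the uniform gap $\sup_{t>0}\|1-T_t\|\le c<2$ one invokes Kato's characterization of holomorphic semigroups \cite{K}, which states that a bounded $C_0$-semigroup with $\limsup_{t\to 0}\|1-T_t\|<2$ is bounded analytic, with bounds controlled by $c$ only; this gives \eqref{pisier} with $C=C(\d,q)$ in one stroke. Note also the internal tension in your write-up: your final paragraph correctly explains that a single-operator gap cannot suffice and that uniformity across all scales is what matters, yet the mechanism of your second paragraph is a purely single-operator claim (each $T_t$ individually Ritt, with $t$-independent constants) and never actually uses the semigroup law. It is Kato's theorem that exploits the family structure, and it cannot be bypassed by Lemma~\ref{spectral estimate} alone.
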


\begin{proof}
 Applying Lemma~\ref{spectral estimate} to $T=T_t$, we get
 $$\sup_{t>0}\big\|1-T_t\big\|\le \min\big(\frac32,\, 2(1-\frac{\d}{2^q})^{1/q}\big)<2.$$
Then using Kato's characterization of analytic semigroups in \cite{K}, we deduce \eqref{pisier}.
\end{proof}

%%%%%%%%%%%%%%%%%%%%%%%%%%%%%%%%%%%%%%%%%%%%%%%%%%%%%%
%%%%%%%%%%%%%%%%%%%%%%%%%%%%%%%%%%%%%%%%%%%%%%%%%%%%%%

\section{Proof of Theorem~\ref{heat}}
\label{pf-heat}

%%%%%%%%%%%%%%%%%%%%%%%%%%%%%%%%%%%%%%%%%%%%%%%%%%%%%%
%%%%%%%%%%%%%%%%%%%%%%%%%%%%%%%%%%%%%%%%%%%%%%%%%%%%%%

This section is devoted to the proof of Theorem~\ref{heat}. Let us first note that assertion (ii) follows easily from (i) by duality. Indeed, let  $\{e_\l\}$ be the resolution of the identity of $\{T_t\}_{t>0}$ on $L_2(\O)$:
 $$T_tf=\int_0^\8 e^{-\l t}de_\l f,\quad f\in L_2(\O).$$
Then
 $$\partial^k T_tf=(-1)^k\int_0^\8 \l^k e^{-\l t}de_\l f.$$
It thus follows that
 \begin{align*}
 \int_\O\int_0^\8 \big|t^k\partial^k T_tf\big|^2\,\frac{dt}t\,d\mu
 &=\int_0^\8 \int_0^\8 t^{2k}\l^{2k} e^{-2\l t}d\la e_\l f,\,f\ra\,\frac{dt}t\\
 &=4^{-k}\int_0^\8 \int_0^\8 t^{2k} e^{-t}\,\frac{dt}t\,d\la e_\l f,\,f\ra\\
 &=4^{-k}(2k-1)! \int_0^\8 d\la e_\l f,\,f\ra\\
 &=4^{-k}(2k-1)! \int_\O |f-\mathsf F (f)|^2d\mu.
 \end{align*}
By polarization, for $f, g\in L_2(\O)$ we have
 $$\int_\O (f-\mathsf F (f))(g-\mathsf F (g))d\mu = \frac{4^k}{(2k-1)!}\,\int_\O\int_0^\8 \big(t^k\partial^k T_tf\big)\big(t^k\partial^k T_tg\big)\,\frac{dt}t\,d\mu.$$
We then deduce that for any $f\in L_1(\O)\cap L_\8(\O)\ot X$ and $g\in L_1(\O)\cap L_\8(\O)\ot X^*$
 $$\int_\O \la g-\mathsf F (g),f-\mathsf F (f)\ra d\mu = \frac{4^k}{(2k-1)!}\,\int_\O\int_0^\8 \la t^k\partial^k T_tg, t^k\partial^k T_tf\ra \,\frac{dt}t\,d\mu,$$
where $\la\,,\,\ra$ denotes the duality bracket between $X$ and  $X^*$. Hence
 \begin{align*}
 \left|\int_\O \la g-\mathsf F (g),f-\mathsf F (f)\ra d\mu\right|
 \le\frac{4^k}{(2k-1)!}&\left\|\left(\int_0^\8\big\|t^k \partial^k T_t g\big\|_{X^*}^{q'}\frac{dt}t\right)^{1/{q'}}\right\|_{L_{p'}(\O)}\\
 \cdot\,&\left\|\left(\int_0^\8\big\|t^k \partial^k T_t f\big\|_X^q\frac{dt}t\right)^{1/q}\right\|_{L_{p}(\O)},
  \end{align*}
where $r'$ is the conjugate index of $r$. Under the assumption of (ii)  and by duality, we have that  $X^*$ is of martingale cotype $q'$. Therefore,  (i) implies
 $$ \left\|\left(\int_0^\8\big\|t^k \partial^k T_t g\big\|_{X^*}^{q'}\,\frac{dt}t\right)^{1/q'}\right\|_{L_{p'}(\O)}\le \frac{4^k C}{(2k-1)!}\, \big\|g\big\|_{L_{p'}(\O; X^*)}\,.$$
Combining the previous inequalities and taking the supremum over all $g$ in the unit ball of $L_{p'}(\O; X^*)$, we derive assertion (ii).

\smallskip

Thus we are left to showing assertion (i). In the rest of this section, we will assume that  $X$ is a Banach space of martingale cotype $q$ with $2\le q<\8$.
The following lemma, due to Hyt\"onen and Naor \cite[Lemma 24]{HN}, will play an important role in our argument.
\begin{lem}[Hyt\"onen-Naor]\label{HN-lem}
 For any $f\in L_q(\O;X)$ we have
 $$\left(\int_0^\8\big\|(T_t-T_{3t})f\big\|_{L_q(\O;X)}^q\,\frac{dt}{t}\right)^{1/q}\les\big\|f\big\|_{L_q(\O;X)}\,,\quad\forall\, f\in L_q(\O;X)\,.$$
\end{lem}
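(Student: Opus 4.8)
The plan is to turn the scale-invariant integral into an average of discrete $q$-th moment square functions and to recognize each of these as the sum of $q$-th moments of the increments of a (reverse) martingale, so that the martingale cotype $q$ hypothesis applies directly. First I would discretize the scale: writing $t=3^js$ with $j\in\ent$ and $s\in[1,3)$, the map $(j,s)\mapsto 3^js$ is a bijection onto $(0,\8)$ and $dt/t=ds/s$, so
$$\int_0^\8\big\|(T_t-T_{3t})f\big\|_{L_q(\O;X)}^q\,\frac{dt}t
=\int_1^3\Big(\sum_{j\in\ent}\big\|(T_{3^js}-T_{3^{j+1}s})f\big\|_{L_q(\O;X)}^q\Big)\frac{ds}s.$$
Since $\int_1^3 ds/s=\log 3<\8$, it suffices to bound the inner sum by $C^q\|f\|_{L_q(\O;X)}^q$ uniformly in $s\in[1,3)$.

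For the inner sum I would invoke the continuous form of Rota's dilation theorem (\cite[Chapter~IV]{stein}), the semigroup analogue of Lemma~\ref{Rota} (applicable since $T_t=(T_{t/2})^2$ with $T_{t/2}$ symmetric Markovian): there is an enlarged probability space carrying an isometric lift $f\mapsto\wt f$ of $L_q(\O;X)$, a decreasing family of $\s$-algebras $\{\M_t\}_{t>0}$, and a fixed contractive conditional expectation $\mathbb{E}_\A$, such that $T_tf=\mathbb{E}_\A\mathbb{E}_{\M_t}\wt f$ for every $t>0$. Fixing $s$ and setting $N_j=\mathbb{E}_{\M_{3^js}}\wt f$, the inclusion $\M_{3^{j+1}s}\subseteq\M_{3^js}$ together with the tower property make $(N_j)_{j\in\ent}$ a reverse martingale with
$$(T_{3^js}-T_{3^{j+1}s})f=\mathbb{E}_\A\big(N_j-N_{j+1}\big).$$
Since $\mathbb{E}_\A$ is contractive on $L_q(\O;X)$, this gives $\|(T_{3^js}-T_{3^{j+1}s})f\|_{L_q(\O;X)}\le\|N_j-N_{j+1}\|_{L_q(\wt\O;X)}$.

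To finish I would apply the martingale cotype $q$ of $X$. Reindexing $(N_j)$ on a finite window as an ordinary martingale, the cotype $q$ inequality yields $\sum_j\|N_j-N_{j+1}\|_{L_q(\wt\O;X)}^q\le C^q\sup_j\|N_j\|_{L_q(\wt\O;X)}^q$, while $\sup_j\|N_j\|_{L_q(\wt\O;X)}\le\|\wt f\|_{L_q(\wt\O;X)}=\|f\|_{L_q(\O;X)}$ because each $N_j$ is a conditional expectation of $\wt f$. Letting the window exhaust $\ent$ (monotone convergence on the left-hand sum) bounds the inner sum by $C^q\|f\|_{L_q(\O;X)}^q$, with $C$ the martingale cotype $q$ constant of $X$. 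As $C$, the isometry of the lift and the norm of $\mathbb{E}_\A$ are all independent of $s$, integrating over $s\in[1,3)$ costs only the factor $\log 3$ and produces the asserted inequality.

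The step I expect to be the main obstacle is the correct deployment of Rota's dilation in its continuous form: one must exhibit a single decreasing filtration $\{\M_t\}_{t>0}$ along which all the sampled times $3^js$ are simultaneously represented, so that the semigroup differences $(T_{3^js}-T_{3^{j+1}s})f$ genuinely become martingale increments, and one must verify that the constants involved (the cotype constant, the norm of the lift, the norm of $\mathbb{E}_\A$) are uniform in $s$. Everything else — the scale-invariant decomposition and the two contractivity estimates — is routine.
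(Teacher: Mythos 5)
Your proof is correct and is essentially the paper's own argument: the same substitution $t=3^js$ reduces the lemma to a bound, uniform in $s\in[1,3)$, on $\sum_{j\in\ent}\big\|(T_{3^js}-T_{3^{j+1}s})f\big\|_{L_q(\O;X)}^q$, and Rota's dilation theorem converts this sum into $q$-th moments of (reverse) martingale increments, to which the martingale cotype $q$ inequality applies. The one technical point you rightly flag---producing a single decreasing filtration representing all the times $3^js$ at once---is resolved just as you anticipate: on a finite window $j\ge-N$ one has $T_{3^js}=\big(T_{3^{-N}s/2}\big)^{2\cdot3^{j+N}}$, so the full discrete version of Rota's theorem applied to the single symmetric Markovian operator $T_{3^{-N}s/2}$ suffices, and one then lets $N\to\8$ by monotone convergence.
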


Based on Rota's dilation theorem quoted in the previous section, the proof is simple. Below is the main idea. First write
  \begin{align*}
  \int_0^\8\big\|(T_t-T_{3t})f\big\|_{L_q(\O;X)}^q\,\frac{dt}{t}
 &=\sum_{k\in\ent}\int_{3^k}^{3^{k+1}}\big\|(T_t-T_{3t})f\big\|_{L_q(\O;X)}^q\,\frac{dt}{t}\\
 &=\int_1^3\sum_{k\in\ent}\big\|(T_{3^kt}-T_{3^{k+1}t})f\big\|_{L_q(\O;X)}^q\,\frac{dt}{t}\,.
  \end{align*}
Then  Rota's dilation theorem allows us to  turn $\{T_{3^kt}-T_{3^{k+1}t}\}_k$ for each fixed $t$ into a martingale difference sequence.

\smallskip

The following lemma shows  Theorem~\ref{heat} in the case of $p=q$.

\begin{lem}\label{p=q}
 Let $k$ be a positive integer. Then
 \beq\label{pqk}
 \left(\int_0^\8\big\|t^k\partial^k T_tf\big\|_{L_q(\O;X)}^q\,\frac{dt}{t}\right)^{1/q}\les\big\|f\big\|_{L_q(\O;X)}\,,\quad\forall\, f\in L_q(\O;X),
 \eeq
 where the relevant constant depends on $k$ and the martingale cotype $q$ constant of $X$.
 \end{lem}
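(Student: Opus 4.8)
The plan is to reduce the $k$-th derivative estimate \eqref{pqk} to the single first-order difference estimate of Lemma~\ref{HN-lem} by expressing $t^k\partial^k T_t$ as an average of operators of the form $T_s - T_{3s}$. The crucial observation is that $\{t\partial T_t\}_{t>0}$ is uniformly bounded on $Y=L_q(\O;X)$ by Lemma~\ref{analyticity0}, so that higher derivatives can be controlled by iterating the semigroup property together with this analyticity bound. First I would record the elementary identity
 $$t^k\partial^k T_t = \left(t\partial T_t\right)\circ\cdots,$$
or more precisely write $\partial^k T_t$ in terms of $\partial T_s$ evaluated at scaled times using $T_t = T_{t/k}\cdots T_{t/k}$ ($k$ factors), so that by Leibniz's rule $\partial^k(T_{t/k})^k$ distributes one derivative onto each factor. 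Each factor $\frac tk\,\partial T_{t/k}$ is then uniformly bounded by \eqref{pisier}, which will let me trade $k-1$ of the $k$ derivatives for a bounded constant at the cost of a factor depending only on $k$ and the analyticity constant.

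The next step is to connect the remaining single derivative to a finite difference of the type appearing in Lemma~\ref{HN-lem}. Here I would use that, for the symmetric diffusion semigroup, one can write $t\partial T_t$ as an absolutely convergent integral average of the operators $T_s-T_{3s}$ over a suitable range of scales: concretely, since $\partial T_t=-A\,T_t$ and $T_s-T_{3s}=\int_s^{3s}(-\partial T_r)\,dr=\int_s^{3s}A\,T_r\,dr$, the family $\{T_s-T_{3s}\}$ is a smoothed version of $t\partial T_t$, and the two are comparable after integrating against $dt/t$. Thus I would show that
 $$\left(\int_0^\8\big\|t^k\partial^k T_t f\big\|_{Y}^q\,\frac{dt}{t}\right)^{1/q}\les\left(\int_0^\8\big\|(T_s-T_{3s})g\big\|_{Y}^q\,\frac{ds}{s}\right)^{1/q}$$
for an appropriate $g$ (essentially $g=f$ after absorbing the bounded factors), and then invoke Lemma~\ref{HN-lem} to bound the right-hand side by $\|f\|_Y$. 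A clean way to organize this is to fix the innermost derivative, integrate the remaining $k-1$ bounded derivative operators against the difference, and use Minkowski's integral inequality in $L_q(dt/t)$ to keep the $L_q$-norms of the difference operators intact.

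The main obstacle I expect is the bookkeeping in passing from a genuine derivative $t\partial T_t$ to the finite difference $T_s-T_{3s}$ while preserving the $q$-th power integrability in $t$ with a scale-invariant (i.e.\ semigroup-independent) constant. The naive pointwise bound $\|t\partial T_t\|\le C$ is too lossy to yield the square-function (here $g$-function) estimate directly, so the heart of the argument is a quantitative comparison of the two square functions, not merely a uniform operator bound. The cleanest route is likely to write $t^k\partial^k T_t$ as a convolution-type average $\int \psi(t/s)\,(T_s-T_{3s})\,\frac{ds}{s}$ with a kernel $\psi$ of sufficient decay (coming from the analyticity of the semigroup and the spectral representation $\partial^k T_t=(-1)^k\int_0^\8\l^k e^{-\l t}\,de_\l$), and then apply the vector-valued Minkowski inequality together with $\int_0^\8\abs{\psi(u)}\,\frac{du}{u}<\8$. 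Verifying the requisite decay of $\psi$ uniformly over all symmetric diffusion semigroups—so that the constant depends only on $k$, $q$ and the martingale cotype constant of $X$—is the technical crux, after which Lemma~\ref{HN-lem} finishes the proof.
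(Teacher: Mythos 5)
Your outer reduction is exactly the paper's: trade $k-1$ of the $k$ derivatives via $t^k\partial^k T_t=k^k\bigl(\tfrac tk\,\partial T_{t/k}\bigr)^k$ and the uniform analyticity bound \eqref{pisier}, then try to control the first-order $g$-function by the difference square function and finish with Lemma~\ref{HN-lem}. That part is sound, and you correctly identify the comparison of the two square functions as the crux. The gap is that the mechanism you propose for this crux cannot work: there is \emph{no} scalar kernel $\psi$ with $\int_0^\infty|\psi(u)|\,\frac{du}u<\infty$ (nor even a finite signed measure) such that $t\partial T_t=\int_0^\infty\psi(t/s)\,(T_s-T_{3s})\,\frac{ds}s$. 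Indeed, for the heat semigroup (whose generator has spectrum all of $[0,\infty)$) such an operator identity would force the scalar identity $xe^{-x}=\int_0^\infty\psi(u)\bigl(e^{-x/u}-e^{-3x/u}\bigr)\,\frac{du}u$ for every $x>0$ (up to an irrelevant sign). Taking Mellin transforms on the line ${\rm Re}\,z=0$ — legitimate since $v\mapsto\psi(e^v)\in L_1(\real)$ and $e^{-y}-e^{-3y}$ is integrable against $\frac{dy}y$ — yields $\Gamma(1+{\rm i}\tau)=\hat\psi({\rm i}\tau)\,\Gamma({\rm i}\tau)\bigl(1-3^{-{\rm i}\tau}\bigr)$ for all real $\tau$, where $\hat\psi({\rm i}\tau)=\int_0^\infty\psi(u)u^{{\rm i}\tau-1}du$ is bounded. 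At $\tau=2\pi k/\ln 3$ with $k\in\ent\setminus\{0\}$ the factor $1-3^{-{\rm i}\tau}$ vanishes while $\Gamma(1+{\rm i}\tau)\neq0$: contradiction. So the differences $T_s-T_{3s}$ cannot reproduce $t\partial T_t$ by any scalar averaging; this is a structural obstruction, not a bookkeeping issue to be fixed by decay estimates on $\psi$.

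What makes the comparison work — and this is precisely the Hyt\"onen--Naor idea the paper uses — is an \emph{operator-valued} decomposition in which analyticity, not kernel decay, supplies the summability. From $\partial T_{t+s}=\partial T_t\,T_s$ one has the telescoping identity
\begin{equation*}
\partial T_tf=\sum_{j=-1}^{\infty}\bigl(\partial T_{2^{j+1}t}-\partial T_{2^{j+2}t}\bigr)f
=\sum_{j=-1}^{\infty}\partial T_{2^{j}t}\bigl(T_{2^{j}t}-T_{3\cdot2^{j}t}\bigr)f .
\end{equation*}
After the triangle inequality in $L_q\bigl(\O;L_q(\real_+,\frac{dt}t;X)\bigr)$ and the substitution $t\mapsto2^{-j}t$, the scale invariance of $\frac{dt}t$ produces the geometric factor $2^{-j}$ (this replaces the nonexistent kernel decay), and every summand collapses to the \emph{same} quantity $\bigl(\int_0^\infty\|t\partial T_t(T_t-T_{3t})f\|^q\,\frac{dt}t\bigr)^{1/q}$; only at that point is \eqref{pisier} invoked, giving $\|t\partial T_t(T_t-T_{3t})f\|\les\|(T_t-T_{3t})f\|$, and Lemma~\ref{HN-lem} concludes. (Alternatively, the square-function comparison you want is a true theorem: it is the McIntosh--Yagi result, Lemma~\ref{Mac}, applied with $\f(z)=ze^{-z}$ and $\psi(z)=e^{-z}-e^{-3z}$, for which $\int_0^\infty\psi(t)\frac{dt}t=\ln3\neq0$; but its proof runs through sectoriality and the $H^\infty$ calculus — that is, again through analyticity of the semigroup on $L_q(\O;X)$ — and this is the route the paper takes in the discrete case, not a scalar convolution identity.) So the missing ingredient in your write-up is exactly the telescoping identity above; with it in place of your convolution representation, your argument closes.
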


\begin{proof}
We will use the idea of the proof of Theorem~17 of \cite{HN}. By virtue of  the identity $\partial T_{t+s}=\partial T_t\,T_s$, we write
 $$
 \partial T_tf
 =\sum_{k=-1}^\8\big(\partial T_{2^{k+1}t}- \partial T_{2^{k+2}t}\big)f
 =\sum_{k=-1}^\8\partial T_{2^{k}t}\big(T_{2^{k}t}- T_{3\cdot2^{k}t}\big)f.
 $$
Then by the triangle inequality we get
  \begin{align*}
  \left(\int_0^\8\big\|t\partial T_tf\big\|_{L_q(\O;X)}^q\,\frac{dt}{t}\right)^{1/q}
  &\le \sum_{k=-1}^\8 \left(\int_0^\8\big\|t\partial T_{2^{k}t}\big(T_{2^{k}t}- T_{3\cdot2^{k}t}\big)f\big\|_{L_q(\O;X)}^q\,\frac{dt}{t}\right)^{1/q}\\
  &= \sum_{k=-1}^\82^{-k}\left(\int_0^\8 \big\|t\partial T_t\big(T_{t}-T_{3t}\big)f\big\|_{L_q(\O;X)}^q\,\frac{dt}{t}\right)^{1/q}\\
  &=4\left(\int_0^\8\big\|t\partial T_t\big(T_{t}- T_{3t}\big)f\big\|_{L_q(\O;X)}^q\,\frac{dt}{t}\right)^{1/q}\,.
  \end{align*}

We are now in a position of using Lemma~\ref{analyticity0} with $p=q$.  Indeed, since $X$ is of martingale cotype $q$, so is $Y=L_q(\O; X)$. Then by \cite{pis1}, $Y$ can be renormalized into a uniformly convex space of power type $q$, that is, $Y$ admits an equivalent norm satisfying \eqref{conx}. Thus we have \eqref{pisier}; moreover, the constant $C$ there depends only on $q$ and the martingale cotype $q$ constant of $X$.

Therefore,
 $$\big\|t\partial T_t\big(T_{t}- T_{3t}\big)f\big\|_{L_q(\O;X)}\les \big\|\big(T_{t}- T_{3t}\big)f\big\|_{L_q(\O;X)}\,,\quad\forall\,t>0.$$
Combining the above inequalities together with Lemma~\ref{HN-lem}, we deduce
 $$\left(\int_0^\8\big\|t\partial T_tf\big\|_{L_q(\O;X)}^q\,\frac{dt}{t}\right)^{1/q}\les
 \left(\int_0^\8\big\|\big(T_{t}- T_{3t}\big)f\big\|_{L_q(\O;X)}^q\,\frac{dt}{t}\right)^{1/q}\les \big\|f\big\|_{L_q(\O;X)}\,.$$
This is \eqref{pqk} for $k=1$. To handle a general $k$, by the semigroup identity $T_{t+s}=T_tT_s$ once more, we have
 $$t^k\partial^k T_t=k^k\left(\frac{t}k\,\partial T_{\frac{t}k}\right)^k\,.$$
Thus, by \eqref{pisier} and the already proved inequality, we obtain
  \begin{align*}
  \int_0^\8\big\|t^k\partial^k T_tf\big\|_{L_q(\O;X)}^q\,\frac{dt}{t}
 &=k^k \int_0^\8\big\|\left(t\,\partial T_{t}\right)^kf\big\|_{L_q(\O;X)}^q\,\frac{dt}{t}\\
 &\les \int_0^\8\big\|t \partial T_{t}\,f\big\|_{L_q(\O;X)}^q\,\frac{dt}{t}\les  \big\|f\big\|_{L_q(\O;X)}^q\,.
  \end{align*}
The lemma is thus proved.
 \end{proof}

To show Theorem~\ref{heat} for any $1<p<\8$, we will use Stein's complex interpolation machinery. To that end, we will need the fractional integrals. For a (nice) function $\f$ on $(0,\,\8)$ define
 $$\mathrm I^\a\f(t)=\frac1{\Ga(\a)}\,\int_0^t (t-s)^{\a-1}\f(s)ds,\quad t>0.$$
The integral in the right hand side is well defined for any $\a\in\com$ with ${\rm Re}\,\a>0$; moreover, $\mathrm I^\a\f$ is analytic in the right  half complex plane ${\rm Re}\,\a>0$. Using integration by parts, Stein showed in \cite[section~III.3]{stein} that $\mathrm I^\a\f$ has an analytic continuation to the whole complex plane, which satisfies the following properties
 \begin{enumerate}[$\bullet$]
 \item $\mathrm I^\a\mathrm I^\b\f=\mathrm I^{\a+\b}\f$ for any $\a, \b\in\com$;
 \item $\mathrm I^0\f=\f$;
 \item $\mathrm I^{-k}=\partial^k\f$ for any positive integer $k$.
 \end{enumerate}
We will apply $\mathrm I^\a$ to $\f$ defined by $\f(s)=T_sf$ for a given function $f$ in $L_p(\O; X)$ and set
 $$\mathrm M^\a_tf=t^{-\a} \mathrm I^\a \f(t)\;\text{ with }\; \f(s)=T_sf.$$
Note that
 $$\mathrm M^1_tf=\frac1t\,\int_0^tT_sfds,\;\; \mathrm M^0_tf=T_tf\;\text{ and }\; \mathrm M^{-k}_tf=t^k \partial^k T_tf\;\text{ for }\;k\in\nat.$$

The following lemma is \cite[Theorem~2.3]{MTX}.

\begin{lem}\label{average}
 Let $q$ and $X$ be as in Theorem~\ref{heat}. Then for any $1<p<\8$ we have
  $$\left\|\left(\int_0^\8\big\|t\partial \mathrm M^1_tf\big\|_{X}^q\,\frac{dt}{t}\right)^{1/q}\right\|_{L_p(\O)}\les\big\|f\big\|_{L_p(\O;X)}\,,\quad\forall\,f\in L_p(\O; X).$$
 \end{lem}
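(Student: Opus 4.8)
The plan is to reduce the inequality to a square-function estimate to which the martingale-cotype $q$ geometry of $X$ applies directly. I would begin with the elementary identity obtained by differentiating the average: since $\mathrm M^1_tf=\frac1t\int_0^tT_sf\,ds$,
$$t\partial\mathrm M^1_tf=T_tf-\mathrm M^1_tf=\frac1t\int_0^t(T_t-T_s)f\,ds=\frac1t\int_0^t r\,\partial T_rf\,dr,$$
so $t\partial\mathrm M^1_t$ is a Hardy-type average of the semigroup building blocks $r\partial T_r=\mathrm M^{-1}_r$, with homogeneous kernel $\frac rt\mathbf 1_{\{r<t\}}\in L_1(\frac{dr}r)$. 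A Schur test in $L_q(\frac{dr}r;X)$ then gives the pointwise bound $(\int_0^\8\|t\partial\mathrm M^1_tf\|_X^q\frac{dt}t)^{1/q}\les(\int_0^\8\|r\partial T_rf\|_X^q\frac{dr}r)^{1/q}$. This is clean but \emph{circular}, its right-hand side being exactly the quantity controlled by Theorem~\ref{heat}(i) with $k=1$; it shows nonetheless that the only non-circular all-$p$ input at hand is the Poisson estimate of Theorem~\ref{MTX}(i).

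Comparing directly with the Poisson semigroup is, however, the wrong way round. In spectral terms $t\partial\mathrm M^1_t=\psi(tA)$ with $\psi(x)=e^{-x}-\frac{1-e^{-x}}x$, a function of $A$, whereas $s\partial P_s=\eta(s\sqrt A)$ with $\eta(y)=-ye^{-y}$ is a function of $\sqrt A$; writing a profile of $A$ as a superposition of profiles of $\sqrt A$ is the inverse-subordination direction, and the associated reproducing kernel (a quotient of Gamma factors, by a Mellin computation) grows exponentially, so no Schur or Minkowski comparison $G_{\mathrm M}\les G_P$ can hold. I would therefore prove the lemma directly, by the Rota-dilation method already used for Lemma~\ref{HN-lem}, but keeping the outer $L_p(\O)$-norm throughout. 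The decisive point is that Pisier's martingale-cotype $q$ inequality holds on $L_p(\O;X)$ for \emph{every} $1<p<\8$, not only for $p=q$.

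Concretely I would discretise triadically, $\int_0^\8(\cdots)\frac{dt}t=\int_1^3\sum_{k\in\ent}(\cdots)\frac{dt}t$, and use Rota's theorem (Lemma~\ref{Rota}) to realise, for each fixed $t$, the family $\{T_{3^kt}-T_{3^{k+1}t}\}_k$ as an $X$-valued martingale difference sequence on an enlarged space; Pisier's inequality then yields $\|(\sum_k\|(T_{3^kt}-T_{3^{k+1}t})f\|_X^q)^{1/q}\|_{L_p(\O)}\les\|f\|_{L_p(\O;X)}$ uniformly in $t\in[1,3]$, after which one integrates in $t$ against the finite measure $\frac{dt}t$. Transferring this back to the averages through the identity above and the uniform boundedness of $\{t\partial\mathrm M^1_t\}$ (a consequence of the analyticity in Lemma~\ref{analyticity0}) gives Lemma~\ref{average}. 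The main obstacle is precisely the passage to \emph{all} $p$: for $p\ge q$ the final integration in $t$ is Minkowski's inequality in $L_{p/q}(\O)$, but for $1<p<q$ this runs the wrong way and must be replaced by a duality argument (using that $X^*$ is of martingale type $q'$) or by a maximal-function estimate; a further technical step is the reduction of the continuous $g$-function to the discrete martingale differences, which needs a within-block maximal bound. This availability for all $p$ — resting on the all-$p$ martingale inequality rather than on the analyticity factor $t\partial T_t$ that confines Lemma~\ref{p=q} to $p=q$ — is exactly what makes the averaging estimate, rather than the derivative estimate, the right endpoint for the ensuing Stein interpolation.
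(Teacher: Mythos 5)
Your fixed-$t$ step is sound: combining Rota's dilation (applied to $Q=T_{3^{-N}t/2}$, so that $T_{3^kt}=Q^{2\cdot 3^{k+N}}$ for $k\ge -N$), conditional Jensen for the $\ell_q(X)$-valued function, and the $L_p$-version of Pisier's martingale cotype inequality does give $\big\|\big(\sum_{k}\|(T_{3^kt}-T_{3^{k+1}t})f\|_X^q\big)^{1/q}\big\|_{L_p(\O)}\les\|f\|_{L_p(\O;X)}$ uniformly in $t$, for every $1<p<\8$. But the argument then breaks at the two places where you are vague. First, and fatally, the transfer from these triadic differences back to $t\partial \mathrm M^1_tf$ is unjustified. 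The identity $t\partial\mathrm M^1_tf=\frac1t\int_0^t r\partial T_rf\,dr$ reintroduces $r\partial T_rf$, which you yourself identified as circular; and the ``uniform boundedness of $\{t\partial\mathrm M^1_t\}$'' (trivial anyway, since $t\partial\mathrm M^1_t=T_t-\mathrm M^1_t$) is an \emph{operator-norm} bound on $L_p(\O;X)$, and operator-norm bounds can only be slipped inside the pointwise $X$-norm of a $g$-function when the outer exponent equals the inner one, i.e. when $p=q$ --- exactly the obstruction that confines Lemma~\ref{p=q} (and the Hyt\"onen--Naor argument) to $p=q$. If instead you attempt a pointwise telescoping of $T_sf-T_tf$ ($s<t$) into ratio-$3$ differences, the part anchored at $s$ does work (the substitution $u=3^js$ produces factors $3^{-j}$ and a Hardy average, handled by Minkowski in $L_q(dt/t)$), but you are left either with the $s$-independent series $\sum_{j\ge0}\|(T_{3^jt}-T_{3^{j+1}t})f\|_X$, which has no decay and whose $L_q(dt/t)$-norm is not controlled, or with ``short'' differences $T_u-T_t$, $t/3<u\le t$, which the triadic estimate does not dominate pointwise. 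No combination of your stated ingredients closes this step for any $p\ne q$.

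Second, the $p<q$ case of the $t$-integration is a genuine gap, not a technicality: a family $G_t\ge0$ with $\|G_t^{1/q}\|_{L_p(\O)}\le1$ for each $t\in[1,3]$ need not satisfy $\big\|\big(\int_1^3G_t\,\frac{dt}t\big)^{1/q}\big\|_{L_p(\O)}\les1$ when $p<q$ (bump-function examples defeat the interchange), so extra structure must be used, and neither ``duality'' nor ``a maximal-function estimate'' is actually carried out. For comparison: the paper does not prove this lemma at all; it quotes it as Theorem~2.3 of \cite{MTX}. The proof there runs in the reverse order of yours: one first proves, for martingales and for every $1<p<\8$, a $g$-function inequality for differences of \emph{Ces\`aro means}, using Abel summation to expand $\sigma_n-\sigma_{n+1}$ in martingale differences with explicitly decaying coefficients (plus Doob's maximal inequality), all pointwise at the martingale level; Rota's dilation then transports the whole Ces\`aro $g$-function at once through $\mathbb{E}_\A$ by conditional Jensen, and an approximation argument yields the continuous averages $\mathrm M^1_t$. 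That ordering --- decompose into martingale differences \emph{before} leaving the martingale world, so that the coefficient decay is algebraic and pointwise rather than coming from operator-norm analyticity --- is precisely the idea missing from your transfer step.
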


 \begin{lem}\label{a-b}
 Let $\a$ and $\b$ be complex numbers such that ${\rm Re}\,\a>{\rm Re}\,\b>-1$. Then for any positive integer $k$
  $$\left(\int_0^\8\big\|t^k\partial^k \mathrm M^\a_tf\big\|_{X}^q\,\frac{dt}{t}\right)^{1/q}
  \le C e^{\pi |{\rm Im}(\a-\b)|}\left(\int_0^\8\big\|t^k\partial^k \mathrm M^\b_tf\big\|_{X}^q\,\frac{dt}{t}\right)^{1/q}\;\text{ on }\O,$$
 where $C$ is a constant depending only on  ${\rm Re}\,\a$ and ${\rm Re}\,\b$.
 \end{lem}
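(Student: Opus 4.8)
The plan is to reduce the lemma to a one-dimensional convolution estimate on the multiplicative group $\big((0,\8),\,dt/t\big)$, after producing an explicit formula that realizes $\mathrm M^\a_tf$ as a scale average of the family $\{\mathrm M^\b_sf\}_s$. Set $\g=\a-\b$, so that ${\rm Re}\,\g>0$. Using the semigroup law $\mathrm I^\g\mathrm I^\b=\mathrm I^\a$, the integral representation of $\mathrm I^\g$ (legitimate since ${\rm Re}\,\g>0$), and the identity $\mathrm I^\b\f(s)=s^\b\mathrm M^\b_sf$ coming straight from the definition of $\mathrm M^\b$, I would substitute $s=tu$ and check that all powers of $t$ cancel (the exponent is $-\a+\g+\b=0$), arriving at
$$\mathrm M^\a_tf=\frac{1}{\Ga(\g)}\int_0^1(1-u)^{\g-1}u^\b\,\mathrm M^\b_{tu}f\,du.$$
Integrability of the kernel at the two endpoints is precisely what the hypotheses ${\rm Re}\,\g>0$ and ${\rm Re}\,\b>-1$ provide.

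Next I would differentiate $k$ times in $t$ under the integral sign and apply the chain rule $\partial^k_t[\mathrm M^\b_{tu}f]=u^k(\partial^k_s\mathrm M^\b_sf)|_{s=tu}$. Abbreviating $G^\a(t)=t^k\partial^k\mathrm M^\a_tf$ and $G^\b(t)=t^k\partial^k\mathrm M^\b_tf$, the clean cancellation $u^{\b+k}t^k=u^\b(tu)^k$ converts the averaging formula into
$$G^\a(t)=\frac{1}{\Ga(\g)}\int_0^1(1-u)^{\g-1}u^\b\,G^\b(tu)\,du.$$
This identity holds pointwise on $\O$, so it suffices to bound the $L^q(dt/t;X)$ norm of $G^\a$ by that of $G^\b$ for a.e. $\o\in\O$.

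The engine is Minkowski's integral inequality together with the scale invariance of the Haar measure $dt/t$: for each fixed $u$ the substitution $s=tu$ shows that $t\mapsto G^\b(tu)$ has the same $L^q(dt/t;X)$ norm as $G^\b$, hence
$$\Big(\int_0^\8\|G^\a(t)\|_X^q\,\tfrac{dt}t\Big)^{1/q}\le\Big(\int_0^1|K(u)|\,du\Big)\Big(\int_0^\8\|G^\b(t)\|_X^q\,\tfrac{dt}t\Big)^{1/q},$$
where $K(u)=\Ga(\g)^{-1}(1-u)^{\g-1}u^\b$. Using $|(1-u)^{\g-1}|=(1-u)^{{\rm Re}\,\g-1}$ and $|u^\b|=u^{{\rm Re}\,\b}$ on $(0,1)$, a Beta-function computation gives $\int_0^1|K(u)|\,du=|\Ga(\g)|^{-1}\,\Ga({\rm Re}\,\g)\Ga({\rm Re}\,\b+1)/\Ga({\rm Re}\,\a+1)$.

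It remains to control $\Ga({\rm Re}\,\g)/|\Ga(\g)|$, which I expect to be the only genuinely analytic point. By Stirling's asymptotics $|\Ga(\s+i\tau)|\sim\sqrt{2\pi}\,|\tau|^{\s-1/2}e^{-\pi|\tau|/2}$ as $|\tau|\to\8$ (uniformly for $\s$ in a compact set), combined with continuity and non-vanishing of $\Ga$ on the vertical line ${\rm Re}\,z={\rm Re}\,\g$, one gets $|\Ga(\g)|^{-1}\le C\,e^{\pi|{\rm Im}\,\g|}$ with $C$ depending only on ${\rm Re}\,\g$. Since ${\rm Im}\,\g={\rm Im}(\a-\b)$ and the surviving Beta ratio depends only on ${\rm Re}\,\a,{\rm Re}\,\b$, collecting constants yields exactly the claimed factor $Ce^{\pi|{\rm Im}(\a-\b)|}$. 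The main obstacle is thus not the analysis but the bookkeeping: justifying the differentiation under the integral and the averaging formula for a dense class of $f$ (using smoothness of $t\mapsto T_tf$ on $(0,\8)$), after which the Minkowski step and the Gamma estimate are routine.
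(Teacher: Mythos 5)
Your proposal is correct and follows essentially the same route as the paper: the identity $\mathrm I^\a=\mathrm I^{\a-\b}\mathrm I^\b$ giving the averaging formula $\mathrm M^\a_tf=\Ga(\a-\b)^{-1}\int_0^1(1-u)^{\a-\b-1}u^\b\,\mathrm M^\b_{tu}f\,du$, differentiation under the integral, Minkowski's inequality combined with the dilation invariance of $dt/t$, and finally the Stirling-type estimate $|\Ga(x+{\rm i}y)|\sim e^{-\frac{\pi}{2}|y|}|y|^{x-\frac12}$ to absorb $|\Ga(\a-\b)|^{-1}$ into the factor $Ce^{\pi|{\rm Im}(\a-\b)|}$. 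Your added details (the explicit Beta-function evaluation and the pointwise-in-$\o$ reduction) only make explicit what the paper leaves implicit.
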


\begin{proof}
Using
  $\mathrm I^\a=\mathrm I^{\a-\b}\,\mathrm I^\b$,
 we write
 $$ \mathrm M^\a_t f
  =\frac{t^{-\a}}{\Ga(\a-\b)}\,\int_0^t(t-s)^{\a-\b-1}s^{\b} \mathrm M^\b_sf\,ds
  =\frac{1}{\Ga(\a-\b)}\,\int_0^1(1-s)^{\a-\b-1}s^{\b} \mathrm M^\b_{ts}f\,ds.
  $$
 Thus
 $$
 t^k\partial^k \mathrm M^\a_t f
 =\frac{1}{\Ga(\a-\b)}\,\int_0^1(1-s)^{\a-\b-1}s^{\b} (ts)^k\partial^k \mathrm M^\b_{ts}f\,ds,
 $$
which implies
  \begin{align*}
  \left(\int_0^\8\big\|t^k\partial^k \mathrm M^\a_tf\big\|_{X}^q\,\frac{dt}{t}\right)^{1/q}
  &\le \frac{1}{|\Ga(\a-\b)|}\,\int_0^1(1-s)^{{\rm Re}(\a-\b)-1}s^{{\rm Re}\,\b} \,ds
  \left(\int_0^\8\big\|t^k\partial^k \mathrm M^\b_{t}f\big\|_{X}^q\,\frac{dt}{t}\right)^{1/q}\\
  &\les\frac{1}{|\Ga(\a-\b)|}\,\left(\int_0^\8\big\|t^k\partial^k \mathrm M^\b_{t}f\big\|_{X}^q\,\frac{dt}{t}\right)^{1/q}\,.
  \end{align*}
Then the desired inequality follows from the following well known estimate on the $\Ga$-function:
 $$\forall\, x,y\in\real,\quad |\Ga(x+{\rm i}y)|\sim e^{-\frac\pi2\,|y|}|y|^{x-\frac12}\;\text{ as }\; y\to\pm\8$$
 (see \cite[p.~151]{tit}).
   \end{proof}

Combining Lemma~\ref{average} and Lemma~\ref{a-b} with $k=\b=1$, we get

  \begin{lem}\label{a>1}
 For any $1<p<\8$ and $\a\in\com$ with ${\rm Re}\,\a>1$
  $$\left\|\left(\int_0^\8\big\|t\partial \mathrm M^\a_tf\big\|_{X}^q\,\frac{dt}{t}\right)^{1/q}\right\|_{L_p(\O)}
  \le C e^{\pi |{\rm Im}\,\a|}\big\|f\big\|_{L_p(\O;X)}\,,\quad\forall\,f\in L_p(\O; X),$$
 where $C$ depends on ${\rm Re}\,\a$, $p$ and the martingale cotype $q$ constant of $X$.
 \end{lem}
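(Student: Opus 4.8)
The plan is to obtain the statement by a direct combination of the two preceding lemmas, the only substantive point being the bookkeeping of constants. Concretely, I would specialize Lemma~\ref{a-b} to the exponents $k=1$ and $\b=1$, turning it into a transfer estimate from the average operator $\mathrm M^1_t$ to $\mathrm M^\a_t$, and then feed the resulting pointwise bound into the already-proven inequality of Lemma~\ref{average}.

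First I would apply Lemma~\ref{a-b} with $k=1$ and $\b=1$. The hypothesis ${\rm Re}\,\a>{\rm Re}\,\b>-1$ reads ${\rm Re}\,\a>1$, which is exactly the assumption of the present lemma, and it yields, pointwise on $\O$,
$$\left(\int_0^\8\big\|t\partial \mathrm M^\a_tf\big\|_{X}^q\,\frac{dt}{t}\right)^{1/q}
\le C e^{\pi |{\rm Im}(\a-1)|}\left(\int_0^\8\big\|t\partial \mathrm M^1_tf\big\|_{X}^q\,\frac{dt}{t}\right)^{1/q}.$$
Here I would record two things: since $1$ is real, ${\rm Im}(\a-1)={\rm Im}\,\a$, so the exponential factor is exactly $e^{\pi|{\rm Im}\,\a|}$; and with ${\rm Re}\,\b=1$ fixed, the constant $C$ from Lemma~\ref{a-b} depends only on ${\rm Re}\,\a$.

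Next I would take the $L_p(\O)$-norm of both sides. As $e^{\pi|{\rm Im}\,\a|}$ is a scalar independent of the point of $\O$, it factors out of the norm, giving
$$\left\|\left(\int_0^\8\big\|t\partial \mathrm M^\a_tf\big\|_{X}^q\,\frac{dt}{t}\right)^{1/q}\right\|_{L_p(\O)}
\le C e^{\pi|{\rm Im}\,\a|}\left\|\left(\int_0^\8\big\|t\partial \mathrm M^1_tf\big\|_{X}^q\,\frac{dt}{t}\right)^{1/q}\right\|_{L_p(\O)}.$$
The norm remaining on the right is precisely the left-hand side of Lemma~\ref{average}, hence is $\les\|f\|_{L_p(\O;X)}$ with an implied constant depending on $p$, $q$ and the martingale cotype $q$ constant of $X$. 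Multiplying the two bounds yields the claim, with final constant depending on ${\rm Re}\,\a$, $p$ and the martingale cotype constant, exactly as stated.

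I do not expect any genuine obstacle: the analytic content has been front-loaded into Lemmas~\ref{average} and \ref{a-b}, and what is left is assembly. The one point deserving care is the provenance of constants — confirming that the $e^{\pi|{\rm Im}(\a-\b)|}$ factor of Lemma~\ref{a-b} collapses to $e^{\pi|{\rm Im}\,\a|}$ under $\b=1$, and that the two constants merge into one with the advertised dependence. I would also flag, looking ahead, that this lemma is the intended input for the Stein complex-interpolation argument used to pass from $p=q$ to general $p$: the factor $e^{\pi|{\rm Im}\,\a|}$ has logarithm linear in ${\rm Im}\,\a$, well within the admissible growth allowed along vertical lines in Stein's analytic-family interpolation, so tracking the correct dependence on ${\rm Im}\,\a$ here is precisely what makes the lemma usable later.
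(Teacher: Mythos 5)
Your proof is correct and is precisely the paper's own argument: the paper derives Lemma~\ref{a>1} in one line by combining Lemma~\ref{average} with Lemma~\ref{a-b} specialized to $k=\b=1$, exactly as you do. Your bookkeeping of the constant and of the factor $e^{\pi|{\rm Im}\,\a|}$ (via ${\rm Im}(\a-1)={\rm Im}\,\a$), as well as the remark on its role in the later Stein interpolation, is accurate.
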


 \begin{lem}\label{p=q2}
   For any $\a\in\com$
   \beq\label{q-a}
  \left\|\left(\int_0^\8\big\|t\partial \mathrm M^\a_tf\big\|_{X}^q\,\frac{dt}{t}\right)^{1/q}\right\|_{L_q(\O)}
  \le C e^{\pi |{\rm Im}\,\a|}\big\|f\big\|_{L_q(\O;X)}\,,\quad\forall\,f\in L_p(\O; X),
 \eeq
 where $C$ depends on ${\rm Re}\,\a$ and the martingale cotype $q$ constant of $X$.
  \end{lem}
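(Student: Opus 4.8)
The plan is to exploit the special feature of the case $p=q$: by Fubini's theorem the left-hand side of \eqref{q-a} equals $\big(\int_0^\8\|t\partial\mathrm M^\a_tf\|_{Y}^q\,\frac{dt}t\big)^{1/q}$ with $Y=L_q(\O;X)$, so the whole statement takes place in the single martingale-cotype-$q$ space $Y$, on which $\{T_t\}$ is a bounded analytic semigroup. The basic input will be Lemma~\ref{p=q}, which gives $\big(\int_0^\8\|\mathrm M^{-k}_tf\|_Y^q\frac{dt}t\big)^{1/q}\les\|f\|_Y$ for every positive integer $k$, recalling that $\mathrm M^{-k}_tf=t^k\partial^kT_tf$.

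The first step is to write $D=t\partial$ and $g_0(s)=s\partial T_sf=\mathrm M^{-1}_sf$, and to notice that the change of variable $s=tu$ turns $\mathrm M^\a_t$ into a dilation average: this yields $D\mathrm M^\a_tf=\mathrm N^\a[g_0](t)$, where $\mathrm N^\a[h](t)=\frac1{\Ga(\a)}\int_0^1(1-u)^{\a-1}h(tu)\,du$, and more generally $D^j\mathrm M^\a_tf=\mathrm N^\a[D^{j-1}g_0](t)$, with $D^{j-1}g_0$ a fixed finite linear combination of the functions $\mathrm M^{-i}_\cdot f$, $1\le i\le j$. Now $\mathrm N^\a$ is a convolution on the multiplicative group $(0,\8)$: under $t=e^x$ it becomes convolution on $\real$ with kernel $K_\a(r)=\frac1{\Ga(\a)}(1-e^{-r})^{\a-1}e^{-r}\mathbf 1_{r>0}$. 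When ${\rm Re}\,\a>0$ this kernel is integrable, with $\|K_\a\|_{L_1(\real)}=\big(|\Ga(\a)|\,{\rm Re}\,\a\big)^{-1}$, so by the vector-valued Young inequality $\mathrm N^\a$ is bounded on $L_q(\frac{dt}t;Y)$ with norm at most $\|K_\a\|_{L_1}$. Combining this with Lemma~\ref{p=q} (applied to $D^{j-1}g_0$) and the asymptotics $|\Ga(x+{\rm i}y)|\sim e^{-\frac\pi2|y|}|y|^{x-1/2}$ already quoted in the proof of Lemma~\ref{a-b} gives \eqref{q-a} for every $\a$ with ${\rm Re}\,\a>0$ — in fact with the better factor $e^{\frac\pi2|{\rm Im}\,\a|}$ up to a polynomial in $|{\rm Im}\,\a|$ — and simultaneously bounds $\big(\int_0^\8\|D^j\mathrm M^\a_tf\|_Y^q\frac{dt}t\big)^{1/q}$ for every $j\ge1$.

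The crux is to reach ${\rm Re}\,\a\le0$, where $K_\a$ is no longer integrable (the singularity at $u=1$ ceases to be summable) and $\mathrm N^\a$ stops being a bounded averaging operator. Rather than continue $\mathrm N^\a$ analytically, I would descend one unit in ${\rm Re}\,\a$ at a time using the elementary identity $D\mathrm M^\g=\mathrm M^{\g-1}-\g\mathrm M^\g$, valid for all $\g$ by analytic continuation of $\partial\mathrm I^\g=\mathrm I^{\g-1}$. Applying $D^j$ gives $D^j\mathrm M^\a=D^{j+1}\mathrm M^{\a+1}+(\a+1)D^j\mathrm M^{\a+1}$, expressing the level-$\a$ quantity through level-$(\a+1)$ quantities at the price of raising the derivative order by one. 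A downward induction on $\lceil-{\rm Re}\,\a\rceil$ then finishes the job: the base case ${\rm Re}\,\a>0$ (all $j$) is the convolution bound above, and the inductive step converts a bound at ${\rm Re}(\a+1)$ into one at ${\rm Re}\,\a$. Each step costs a factor $|\a+1|\les(1+|{\rm Im}\,\a|)$, i.e.\ a polynomial in $|{\rm Im}\,\a|$; since the base estimate carries the rate $e^{\frac\pi2|{\rm Im}\,\a|}$ whereas the target only demands $e^{\pi|{\rm Im}\,\a|}$, these accumulated polynomial losses are comfortably absorbed, and taking $j=1$ yields \eqref{q-a} for every $\a\in\com$.

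I expect the delicate point to be exactly this passage to ${\rm Re}\,\a\le0$. One must check that $D^{j-1}g_0$ is always a combination of genuine derivative $g$-functions $\mathrm M^{-i}_\cdot f$ controlled by Lemma~\ref{p=q}, so that no non-integrable quantity such as $\int_0^\8\|\mathrm M^\b_tf\|_Y^q\frac{dt}t$ (which would diverge for the undifferentiated semigroup) ever enters; one must carry the exponential rate honestly through the induction so the polynomial factors remain below the permitted $e^{\pi|{\rm Im}\,\a|}$; and one should justify the term-by-term differentiations and the fractional-integral identities for $f$ in a suitable dense subclass before extending by density.
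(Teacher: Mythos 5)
Your proposal is correct and takes essentially the same route as the paper: your Young's-inequality bound for the dilation average $\mathrm N^\a$ on $L_q(\frac{dt}t;Y)$ is exactly the Minkowski/dilation-invariance computation behind the paper's Lemma~\ref{a-b} (specialized to $\b=0$ and $p=q$), combined with Lemma~\ref{p=q}, and your descent identity $D^j\mathrm M^\a=D^{j+1}\mathrm M^{\a+1}+(\a+1)D^j\mathrm M^{\a+1}$ is precisely the paper's iteration formula \eqref{a-1} written in the equivalent $(t\partial)^j$ notation, used in the same downward induction on ${\rm Re}\,\a$ with all derivative orders carried simultaneously. The only cosmetic differences are that you re-derive the ${\rm Re}\,\a>0$ case in convolution language instead of citing Lemma~\ref{a-b}, and that you track the $\Gamma$-function rate $e^{\frac{\pi}{2}|{\rm Im}\,\a|}$ more sharply than the stated $e^{\pi|{\rm Im}\,\a|}$ requires.
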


 \begin{proof}
    Combining Lemma~\ref{p=q} and Lemma~\ref{a-b} with $\b=0$, we deduce that for a positive integer $k$ and $\a\in\com$ with ${\rm Re}\,\a>0$
  \beq\label{iterate}
  \left\|\left(\int_0^\8\big\|t^k\partial^k \mathrm M^\a_tf\big\|_{X}^q\,\frac{dt}{t}\right)^{1/q}\right\|_{L_q(\O)}
  \le C e^{\pi |{\rm Im}\,\a|}\big\|f\big\|_{L_q(\O;X)}\,,\quad\forall\,f\in L_q(\O; X),
  \eeq
where $C$ depends on $k$, ${\rm Re}\,\a$ and the martingale cotype $q$ constant of $X$. In particular, when $k=1$, we get \eqref{q-a} for any $\a$ such that ${\rm Re}\,\a>0$.

To deal with the general case, we will use an iteration procedure. Noting that for any  $\a\in\com$
 $$\partial  \mathrm M^\a_t=-\a t^{-1} \mathrm M^\a_t+ t^{-1} \mathrm M^{\a-1}_t,$$
we have
 \beq\label{a-1}
  t^k\partial^k \mathrm M^{\a-1}_t
  =(k+\a) t^k\partial^k \mathrm M^{\a}_t +  t^{k+1}\partial^{k+1} \mathrm M^{\a}_t\,.
  \eeq
This shows that if \eqref{iterate} holds for $\mathrm M^{\a}$, so does it for $\mathrm M^{\a-1}$ instead of $\mathrm M^{\a}$ (with a different constant). Therefore, by  what already proved,  we deduce that \eqref{iterate} holds for any  $\a\in\com$ with ${\rm Re}\,\a>-1$. Repeating this argument, we obtain \eqref{iterate} for any  $\a\in\com$. In particular for $k=1$, we have \eqref{q-a}.
  \end{proof}

 Now we are ready to show Theorem~\ref{heat} (i).

 \begin{proof}[Proof of Theorem~\ref{heat} (i)]
  We will prove the following more general statement: Under the assumption of assertion (i), we have for any $\a\in\com$
  \beq\label{heat a}
  \left\|\left(\int_0^\8\big\|t^k\partial^k \mathrm M^\a_tf\big\|_{X}^q\,\frac{dt}{t}\right)^{1/q}\right\|_{L_p(\O)}
  \les\big\|f\big\|_{L_p(\O;X)}\,,\quad\forall\,f\in L_p(\O; X).
  \eeq
Assertion (i) corresponds to \eqref{heat a} for $\a=0$.

Fix $\a\in \com$. Choose $\theta\in(0,1),\; r\in(1,\8)$, $\a_0, \a_1\in\com$
such that
 $$\frac{1}{p}=\frac{1-\theta}{q} + \frac{\theta}{r}\,,
 \quad
 \a=(1-\theta)\a_0 +\theta\,\a_1,\quad{\rm Re}\,\a_1>1\;\text{ and }\; {\rm Im}\,\a_0={\rm Im}\,\a_1={\rm Im}\,\a.$$
Then by the classical complex interpolation on vector-valued $L_p$-spaces (cf. \cite{bl}), we have
 $$L_p(\O; X)=\big(L_q(\O; X),\, L_r(\O; X)\big)_\theta\,.$$
Thus for any  $f\in L_p(\O;X)$ with norm less than $1$ there exists a continuous function $F$ from the closed strip $\{z\in\com: 0\le {\rm Re}\, z\le1\}$ to $L_q(\O; X)+L_r(\O; X)$, which is analytic in the interior and satisfies
  $$F(\theta)=f,\quad \sup_{y\in\real}\big\|F({\rm i}y)\big\|_{L_q(\O; X)}<1\;\text{ and }\; \sup_{y\in\real}\big\|F(1+{\rm i}y)\big\|_{L_r(\O; X)}<1.$$
 Define
  $$\mathcal F_t(z)=e^{z^2-\theta^2}\,t\partial \mathrm M^{(1-z)\a_0+z\a_1}_tF(z).$$
Viewed as a function of $z$ on the strip $\{z\in\com: 0\le {\rm Re}\, z\le1\}$, $\mathcal F$ takes values in $L_q(\O; L_q(\real_+; X))+L_r(\O; L_q(\real_+; X))$, where $\real_+$ is equipped with the measure $\frac{dt}t$. By the analyticity of $\mathrm M^{(1-z)\a_0+z\a_1}$ in $z$, we see that $\mathcal F$ is analytic in the interior of the strip. Moreover,
by Lemma~\ref{p=q2}
 $$\left\|\left(\int_0^\8\big\|\mathcal F_t({\rm i}y)\big\|_{X}^q\,\frac{dt}{t}\right)^{1/q}\right\|_{L_q(\O)}
  \le C_0' e^{-y^2-\theta^2}\,e^{\pi (|{\rm Im}\,\a|+|{\rm Re}(\a_1-\a_0)y|)}\,,\quad\forall\,y\in\real,$$
where $C_0'$ is a constant depending on $\a, \a_0,\a_1$ and $X$. Hence
 $$\sup_{y\in\real} \left\|\mathcal F({\rm i}y)\right\|_{L_q(\O; L_q((\real_+,\frac{dt}t); X))}\le C_0\,.$$
Similarly, Lemma~\ref{a>1} implies
 $$\sup_{y\in\real} \left\|\mathcal F(1+{\rm i}y)\right\|_{L_r(\O; L_q((\real_+,\frac{dt}t); X))}\le C_1\,.$$
We then deduce that $\mathcal F(\theta)$ belongs to the complex interpolation space
 $$ \big(L_q(\O; L_q(\real_+; X)),\, L_r(\O; L_q(\real_+; X))\big)_{\theta}$$
with norm majorized by $C_0^{1-\theta}C_1^\theta$. However, the latter space coincides with  $L_p(\O; L_q(\real_+; X))$ isometrically. Since
 $$\mathcal F_t(\theta)=t\partial \mathrm M^{\a}_tF(\theta)=t\partial \mathrm M^{\a}_tf,$$
we get \eqref{heat a} for $k=1$. Then using \eqref{a-1} and an induction argument, we derive \eqref{heat a} for any $k$. Thus the theorem is completely proved.
 \end{proof}

%%%%%%%%%%%%%%%%%%%%%%%%%%%%%%%%%%%%%%%%%%%%%%%%%%%%%%
%%%%%%%%%%%%%%%%%%%%%%%%%%%%%%%%%%%%%%%%%%%%%%%%%%%%%%

\section{Proofs of Theorem~\ref{analyticity-discrete} and Theorem~\ref{discrete}}
\label{pf-analyticity-discrete}

%%%%%%%%%%%%%%%%%%%%%%%%%%%%%%%%%%%%%%%%%%%%%%%%%%%%%%
%%%%%%%%%%%%%%%%%%%%%%%%%%%%%%%%%%%%%%%%%%%%%%%%%%%%%%

The main part of  Theorem~\ref{analyticity-discrete} is already contained in Lemma~\ref{spectral estimate}. Armed with that lemma, we can easily show Theorem~\ref{analyticity-discrete}. Let us first  recall the following well known characterization of the analyticity of power bounded operators (cf. \cite[Theorem~2.3]{b} and \cite[Theorem~4.5.4]{N}). Let $\mathbb{D}$  denote the open unit disc of the complex plane and $\T$ the boundary of $\mathbb{D}$.

\begin{lem}\label{analyticity}
 Let $T$ be a power bounded linear operator on a Banach space $Y$. Then $T$ is analytic iff the semigroup $\{e^{t(T-1)}\}_{t>0}$ is analytic and $\s(T)\subset\mathbb{D}\cup\{1\}$.
\end{lem}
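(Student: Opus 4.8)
The plan is to prove both implications by translating between the \emph{discrete} resolvent condition characterizing analyticity of $T$ and the \emph{sectorial} resolvent condition characterizing analyticity of the semigroup generated by $T-1$. Throughout write $A=1-T$, so that $e^{t(T-1)}=e^{-tA}$ and $-A$ is the generator; recall the standard fact that $\{e^{-tA}\}_{t>0}$ is analytic precisely when $A$ is sectorial of some angle $\omega<\pi/2$, i.e. $\sigma(A)\subseteq\overline{S_\omega}$ with $S_\omega=\{\zeta\neq0:|\arg\zeta|<\omega\}$ and $\sup\{|\zeta|\,\|(\zeta-A)^{-1}\|:\zeta\notin S_{\theta}\}<\infty$ for each $\theta\in(\omega,\pi)$. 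The one algebraic identity driving everything is $z-T=(z-1)+A=A-\zeta$ with $\zeta=1-z$, so that $(z-T)^{-1}=(A-\zeta)^{-1}$, $\|(z-T)^{-1}\|=\|(\zeta-A)^{-1}\|$, and $|1-z|=|\zeta|$. Thus the Ritt bound $\sup_{|z|>1}|1-z|\,\|(z-T)^{-1}\|<\infty$ (recalled in the excerpt as equivalent to analyticity of $T$) and the sectorial bound $\sup_{\zeta\notin S_\theta}|\zeta|\,\|(\zeta-A)^{-1}\|<\infty$ are literally the same inequality read through $\zeta=1-z$; the whole content of the lemma is the compatibility of the two \emph{regions} under this substitution, and this is exactly where the spectral hypothesis enters.

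For the forward implication, assume $T$ is analytic. By the characterization recalled in the excerpt one has $\sigma(T)\subseteq\overline{B_\gamma}$ for some $\gamma<\pi/2$, and since the Stolz domain meets the unit circle only at $1$ we get $\sigma(T)\subseteq\mathbb{D}\cup\{1\}$, one of the two asserted conclusions. For the other, I would use the standard strengthening of the resolvent bound from $\{|z|>1\}$ to the full complement of a Stolz domain, namely $\|(z-T)^{-1}\|\lesssim|1-z|^{-1}$ for $z\notin\overline{B_\gamma}$. The map $z\mapsto1-z$ carries $\overline{B_\gamma}$ onto $1-\overline{B_\gamma}$, a region with vertex $0$ contained in the closed sector $\overline{S_\gamma}$; hence $\sigma(A)=1-\sigma(T)\subseteq\overline{S_\gamma}$, and feeding the resolvent bound through $\zeta=1-z$ yields $\|(\zeta-A)^{-1}\|\lesssim|\zeta|^{-1}$ off any $S_\theta$ with $\theta>\gamma$. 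This is sectoriality of $A$ of angle $\gamma<\pi/2$, so $-A=T-1$ generates a bounded analytic semigroup.

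For the converse, assume $T$ is power bounded, $\sigma(T)\subseteq\mathbb{D}\cup\{1\}$, and $A=1-T$ is sectorial of angle $\omega<\pi/2$. I would verify the Ritt bound on $\{|z|>1\}$ by a three-region split. For $|z|\ge2$, power boundedness gives $\|(z-T)^{-1}\|\le\sum_n\|T^n\|\,|z|^{-n-1}\le M/(|z|-1)$ together with $|1-z|/(|z|-1)\le3$, so the product is bounded. On the compact set $\{1\le|z|\le2,\ |z-1|\ge\delta\}$ the hypothesis $\sigma(T)\cap\mathbb{T}\subseteq\{1\}$ keeps $z$ at positive distance from $\sigma(T)$, so the resolvent is bounded there while $|1-z|\le3$. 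The decisive region is $\{|z|>1,\ |z-1|<\delta\}$, handled by sectoriality: writing $z=1+w$, the condition $|z|>1$ reads ${\rm Re}\,w>-|w|^2/2$, so $\zeta=1-z=-w$ satisfies ${\rm Re}\,\zeta<|\zeta|^2/2$ and hence $|\arg\zeta|>\tfrac{\pi}{2}-O(|\zeta|)$; choosing $\theta\in(\omega,\tfrac{\pi}{2})$ and $\delta$ small enough forces $\zeta\notin S_\theta$, whence $\|(z-T)^{-1}\|=\|(\zeta-A)^{-1}\|\lesssim|\zeta|^{-1}=|1-z|^{-1}$. Combining the three regions gives $\sup_{|z|>1}|1-z|\,\|(z-T)^{-1}\|<\infty$, i.e. $T$ is analytic.

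The main obstacle is the last region. The trivial Neumann bound $\|(z-T)^{-1}\|\le M/(|z|-1)$ degenerates as $z$ approaches the unit circle tangentially near $1$, where $|z|-1$ is quadratically smaller than $|1-z|$, so it cannot deliver the Ritt estimate there; the point is that this tangential approach is exactly compensated by sectoriality, because the arc $|z|=1$ through $1$ is sent by $z\mapsto1-z$ to a parabola tangent to the imaginary axis, which lies outside every sector $S_\theta$ with $\theta<\pi/2$ near its vertex. Making this parabola-versus-sector comparison quantitative—and checking that the three regions genuinely exhaust $\{|z|>1\}$—is the crux; the remainder is bookkeeping with the substitution $\zeta=1-z$.
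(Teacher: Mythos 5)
Your argument is correct, but note that it cannot be compared with ``the paper's proof'' in the usual sense: the paper does not prove Lemma~\ref{analyticity} at all, it quotes it as a known result with references to \cite[Theorem~2.3]{b} and \cite[Theorem~4.5.4]{N}. What you have written is essentially a reconstruction of the standard argument behind those citations: the substitution $\zeta=1-z$ identifying the Ritt bound with the sectoriality bound for $A=1-T$, and, for the converse, the three-region verification of the Ritt estimate (Neumann series for $|z|\ge 2$, compactness plus the hypothesis $\sigma(T)\subset\mathbb{D}\cup\{1\}$ in the intermediate annulus, sectoriality near $1$, where your parabola-versus-sector comparison is precisely the correct quantitative point). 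Two refinements would make it airtight. First, the equivalence you invoke at the outset --- $\{e^{-tA}\}_{t>0}$ analytic iff $A$ sectorial of angle $<\pi/2$ --- holds for \emph{bounded} semigroups; this is harmless here, since power boundedness of $T$ gives $\|e^{t(T-1)}\|\le e^{-t}\sum_{n\ge0}t^n\|T^n\|/n!\le M$, but it should be said explicitly. Second, your forward implication leans on the nontrivial ``standard strengthening'' of the resolvent estimate to the complement of a Stolz domain (a theorem of Nagy--Zem\'anek/Lyubich type, not recalled in the paper); this black box can be avoided entirely by an elementary computation: from $\sup_n n\|T^n(T-1)\|<\infty$ and the identity
\[
t(1-T)\,e^{t(T-1)}=e^{-t}\sum_{n\ge0}\frac{t^{n+1}}{n!}\,T^n(1-T),
\]
one gets
\[
\sup_{t>0}\big\|t(1-T)e^{t(T-1)}\big\|\le \Big(\sup_{n\ge0}\,(n+1)\big\|T^n(1-T)\big\|\Big)\cdot\sup_{t>0}\,e^{-t}\sum_{n\ge0}\frac{t^{n+1}}{(n+1)!}<\infty,
\]
which is exactly the characterization of (bounded) analytic semigroups used for \eqref{pisier}; and if some $\lambda\in\mathbb{T}\setminus\{1\}$ lay in $\sigma(T)$, the Ritt bound along the radius $r\lambda$, $r\downarrow 1$, would contradict $\|(r\lambda-T)^{-1}\|\ge \mathrm{dist}(r\lambda,\sigma(T))^{-1}\ge (r-1)^{-1}$. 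With these two adjustments, both directions stay at the level of power series and elementary resolvent estimates, which is arguably preferable in a lemma whose whole role in the paper is to be a cheap bridge between the discrete and continuous notions of analyticity.
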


 \begin{proof}[Proof of Theorem~\ref{analyticity-discrete}]
  Note that $\{e^{t(T-1)}\}_{t>0}$ is a symmetric diffusion semigroup on $(\O,\A,\mu)$. Thus, by Lemma~\ref{analyticity0}, its extension to $Y=L_p(\O; X)$ is analytic. Then Theorem~\ref{analyticity-discrete} immediately follows from Lemmas~\ref{spectral estimate} and \ref{analyticity}.
     \end{proof}

The difficult part ( Lemma~\ref{spectral estimate}) of the above proof concerns the quantitative dependence on the geometry of $X$  of the angle $\g$ of the Stolz  domain which contains the spectrum of the operator $T$. If we only need to show the analyticity of $T$ on $Y$, the proof can be largely shortened by virtue of the following simple fact which, together with  Lemma~\ref{Rota}, ensures that $\s(T)\subset\mathbb{D}\cup\{1\}$.

\begin{rk}
 Let $P$ be a contractive linear projection on a uniformly convex Banach space $Y$. Then $\|\l-P\|< 2$ for any $\l\in\T\setminus\{-1\}$.
\end{rk}

This remark is a weaker form of Lemma~\ref{spectral estimate}. Let $\l\in\T$ such that $\|\l-P\|=2$. Choose a sequence $\{y_k\}$ of unit vectors in $Y$ such that $\|y_k-Py_k\|\to2$ as $k\to\8$. Then the uniform convexity of $Y$ implies  $\|\l y_k+Py_k\|\to0$. However,
 $$|\l+1|\,\|Py_k\|= \|P(\l+P)y_k\| \le \|(\l+P)y_k\| \; \text{ and }\; \|Py_k\|\ge \|\l y_k -Py_k\|-1\to  1.$$
It thus follows that $|\l+1|= 0$, that is, $\l = -1$.

\smallskip

Now we turn to the proof of Theorem~\ref{discrete}. We first deduce assertion  (ii)  from assertion  (i) by duality as in the continuous case. Under the assumption of Theorem~\ref{discrete} and Pisier's renorming theorem \cite{pis1}, we can assume that $X$ is uniformly convex.

\begin{proof}[Proof of Theorem~\ref{discrete} {\rm(ii)}]  Using the spectral resolution of the identity of $T$ on $L_2(\O)$, we obtain
   $$\big\|f-\mathsf F(f)\big\|^2_{L_2(\O)}=\sum_{n=1}^\8n\big\|T^{n-1}(1-T^2)f\big\|_{L_2(\O)}^2\,,\quad f\in L_2(\O).$$
Polarizing this identity, we deduce, for $f\in L_1(\O)\cap L_\8(\O)\otimes X$ and $g\in L_1(\O)\cap L_\8(\O)\otimes X^*$, that
   \begin{align*}
   \left|\int_{\O}\la f-\mathsf F(f),g-\mathsf F(g)\ra d\mu\right|
   \le &\left\|\left(\sum_{n=1}^\8 n^{q'-1}\big\|T^{n-1}(1-T^2)g\big\|_{X^*}^{q'}\right)^{1/q'}\right\|_{L_{p'}(\O)}\\
      \cdot\,&\left\|\left(\sum_{n=1}^\8 n^{q-1}\big\|T^{n-1}(1-T^2)f\big\|_{X}^{q}\right)^{1/q}\right\|_{L_{p}(\O)}\\
      \le 4&\left\|\left(\sum_{n=1}^\8 n^{q'-1}\big\|T^{n-1}(1-T)g\big\|_{X^*}^{q'}\right)^{1/q'}\right\|_{L_{p'}(\O)}\\
      \cdot\,&\left\|\left(\sum_{n=1}^\8 n^{q-1}\big\|T^{n-1}(1-T)f\big\|_{X}^{q}\right)^{1/q}\right\|_{L_{p}(\O)}\,.
    \end{align*}
Thus under the assumption of (ii) and admitting (i), we obtain
 $$\big\|f-\mathsf F(f)\big\|_{L_{p}(\O; X)}\les \left\|\left(\sum_{n=1}^\8 n^{q-1}\big\|T^{n-1}(1-T)f\big\|_{X}^{q}\right)^{1/q}\right\|_{L_{p}(\O)}\,.$$
Thus assertion (ii) is proved.
  \end{proof}

We will need some preparations on the $H^\8$ functional calculus for the proof of Theorem~\ref{discrete} (i). Our reference for the latter subject is \cite{CDMY}.
Let $A$ be a sectorial operator on a Banach space $Y$ with angle $\g$ and $\o>\g$. Define $H_0^\8(\Sigma_\o) $ to be the space of  all bounded analytic functions $\f$ on the sector $\Sigma_\o$ for which there exist two positive constants $s$ and $C$ such that
 $$|\f(z)| \leq C\min\{|z|^s,\, |z|^{-s}\},\quad \forall z\in \Sigma_\o.$$
For any $\f\in H^\8_0(\Sigma_\o)$, we define
 $$\f(A)=\frac{1}{2\pi {\rm i}}\,\int_{\Gamma_\theta} \f(z)(z-A)^{-1}\, dz,$$
where $\theta\in(\g,\,\o)$ and $\Gamma_\theta$ is the
boundary $\partial\Sigma_\theta$ oriented counterclockwise. Then $\f(A)$ is a bounded operator on $Y$.

The following result is a variant of \cite[Theorem~5]{MY}. The proof there works equally for the present setting without change. This was pointed to us by Christian Le Merdy (see \cite [page~719]{LM}).

\begin{lem}\label{Mac}
 Let  $1<q<\8$ and $\f, \psi\in H^\8_0(\Sigma_\o)$ with
 $$\int_0^\8\psi(t)\,\frac{dt}t\neq0.$$
Then there exists a positive constant $C$, depending only on $\f, \psi$ and $q$, such that
 $$\left(\int_0^\8\big\|\f(tA)y\big\|^q\,\frac{dt}t\right)^{1/q}\le C\left(\int_0^\8\big\|\psi(tA)y\big\|^q\,\frac{dt}t\right)^{1/q}\,,\quad \forall y\in Y .$$
 \end{lem}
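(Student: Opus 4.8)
The plan is to reconstruct the classical McIntosh--Yagi argument, whose two ingredients are a Calder\'on-type reproducing formula built from $\psi$ and a multiplicative convolution estimate. Throughout I work with the sectorial operator $A$ on $Y$, using only the resolvent bound $\|(z-A)^{-1}\|\le C/|z|$ that comes with sectoriality; no boundedness of the full $H^\infty$ calculus is needed, which is why the statement can hold for an arbitrary sectorial $A$. As is standard I would first reduce to $y$ lying in a dense subspace of $\overline{R(A)}$ on which the reproducing integral below converges in norm; on $N(A)$ both square functions vanish because $\f,\psi\in H^\infty_0(\Si_\o)$ satisfy $\f(0)=\psi(0)=0$ and hence kill the null space, so nothing is lost in this reduction.

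The first genuine step is to manufacture a companion function. Using the hypothesis $m:=\int_0^\8\psi(t)\,\frac{dt}t\neq0$, I would set $\Psi_\e(z)=z^\e(1+z)^{-2\e}\in H^\infty_0(\Si_\o)$; on $(0,\8)$ these are bounded by $1$ and tend pointwise to the constant $1$ as $\e\to0$, so since $\psi\in L^1(\frac{dt}t)$ dominated convergence gives $\int_0^\8\psi(t)\Psi_\e(t)\,\frac{dt}t\to m\neq0$. Hence for small $\e$ this integral is nonzero, and after rescaling I obtain $\Psi\in H^\infty_0(\Si_\o)$ with $\int_0^\8\psi(t)\Psi(t)\,\frac{dt}t=1$. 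Since $\psi\Psi\in H^\infty_0(\Si_\o)$, the function $z\mapsto\int_0^\8\psi(tz)\Psi(tz)\,\frac{dt}t$ is analytic and, by rotating the ray of integration, independent of $z$; evaluating on $(0,\8)$ shows it equals $1$. By the functional calculus this yields the reproducing formula $\int_0^\8\psi(tA)\Psi(tA)y\,\frac{dt}t=y$ on the dense subspace.

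Applying $\f(sA)$ and using that all these functions of $A$ commute gives
$$\f(sA)y=\int_0^\8\f(sA)\Psi(tA)\,\psi(tA)y\,\frac{dt}t,$$
so that $\|\f(sA)y\|\le\int_0^\8\|\f(sA)\Psi(tA)\|\,\|\psi(tA)y\|\,\frac{dt}t$. The key estimate, and the main obstacle, is the off-diagonal decay
$$\big\|\f(sA)\Psi(tA)\big\|\le C\min\big\{(s/t)^{c},(t/s)^{c}\big\}$$
for some $c>0$. I would prove it by writing $\f(sA)\Psi(tA)=g_{s,t}(A)$ with $g_{s,t}(z)=\f(sz)\Psi(tz)$, using $\|g_{s,t}(A)\|\le\frac{C}{2\pi}\int_{\Ga_\theta}|g_{s,t}(z)|\,\frac{|dz|}{|z|}$ from the resolvent bound, and then inserting the polynomial estimates $|\f(w)|\les\min(|w|^a,|w|^{-a})$ and $|\Psi(w)|\les\min(|w|^{a'},|w|^{-a'})$; the resulting multiplicative convolution of two functions decaying at both $0$ and $\8$ produces the stated decay with $c=\min(a,a')$. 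It is essential here that the companion $\Psi$ is a genuine $H^\infty_0$ function rather than the constant $\tfrac1m$, for otherwise $\f(sA)\Psi(tA)$ would reduce to a multiple of $\f(sA)$ and carry no decay in $s/t$.

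Finally, the kernel $\tilde\kappa(u)=\min(u^{c},u^{-c})$ satisfies $\|\tilde\kappa\|_{L^1(\frac{du}u)}=\frac2c<\8$ and depends only on $s/t$, so $h\mapsto\int_0^\8\tilde\kappa(s/t)h(t)\,\frac{dt}t$ is a multiplicative convolution bounded on $L_q\big((0,\8),\frac{dt}t\big)$ by Young's inequality with norm $\le\frac2c$. Applying this with $h(t)=\|\psi(tA)y\|$ converts the pointwise-in-$s$ bound above into
$$\left(\int_0^\8\big\|\f(sA)y\big\|^q\,\frac{ds}s\right)^{1/q}\le C\left(\int_0^\8\big\|\psi(tA)y\big\|^q\,\frac{dt}t\right)^{1/q},$$
which is the assertion; a density argument then removes the restriction on $y$. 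The routine but slightly delicate points are the convergence of the reproducing integral and the contour computation behind the decay estimate, the latter being where essentially all the real work sits.
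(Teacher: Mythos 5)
Your proposal is correct and is essentially the paper's own proof: the paper establishes this lemma simply by citing McIntosh--Yagi (Theorem~5 there, via Le Merdy), and that argument is precisely your combination of a Calder\'on-type reproducing formula built from a companion function $\Psi$ normalized by $\int_0^\8\psi\Psi\,\frac{dt}{t}=1$, the off-diagonal bound $\|\f(sA)\Psi(tA)\|\les \min\big((s/t)^c,(t/s)^c\big)$ derived from the resolvent estimate alone, and Young's inequality for multiplicative convolution on $L_q\big((0,\8),\frac{dt}{t}\big)$. One caveat on your reduction step: the splitting $Y=\overline{R(A)}\oplus N(A)$ requires reflexivity (harmless here, since $Y=L_q(\O;X)$ with $X$ uniformly convex), and since the square-function expressions are not norm-continuous in $y$, the passage to arbitrary $y$ should be run through uniformly bounded regularizations such as $A(\e+A)^{-1}(1+\e A)^{-1}y$ together with Fatou's lemma rather than a bare density argument; this is the standard routine and does not affect the substance of your proof.
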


\smallskip

\begin{proof}[Proof of Theorem~\ref{discrete} {\rm(i)}]
  We will follow the pattern set up in the proof of Theorem~\ref{heat}. The main difficulty is to prove the following discrete analogue of Lemma~\ref{p=q}:
 \beq\label{p=q: discrete}
 \sum_{n=1}^\8n^{q-1}\big\|T^n(T-1)f\big\|_{L_q(\O; X)}^q\les \big\|f\big\|_{L_q(\O; X)}^q\,,\quad \forall f\in L_q(\O; X).
 \eeq
Contrary to Lemma~\ref{p=q}, the proof of the above inequality is much more involved.  We will adapt the proof of \cite[Proposition~3.2]{LMX} which is based on the $H^\8$ functional calculus.

By Theorem~\ref{analyticity-discrete}, $T$ is analytic as an operator on $Y=L_q(\O; X)$ and we have \eqref{resolvent}.
Let $A=1-T$. Then $A$ is a sectorial operator on $Y$ with angle $\g$. Fix $\theta\in(\g,\, \pi/2)$. Let $L_\theta$ be the boundary of $1-B_\theta$ oriented counterclockwise (see figure~2).

\begin{figure}[ht]
\vspace*{2ex}
\begin{center}
\includegraphics[scale=0.45]{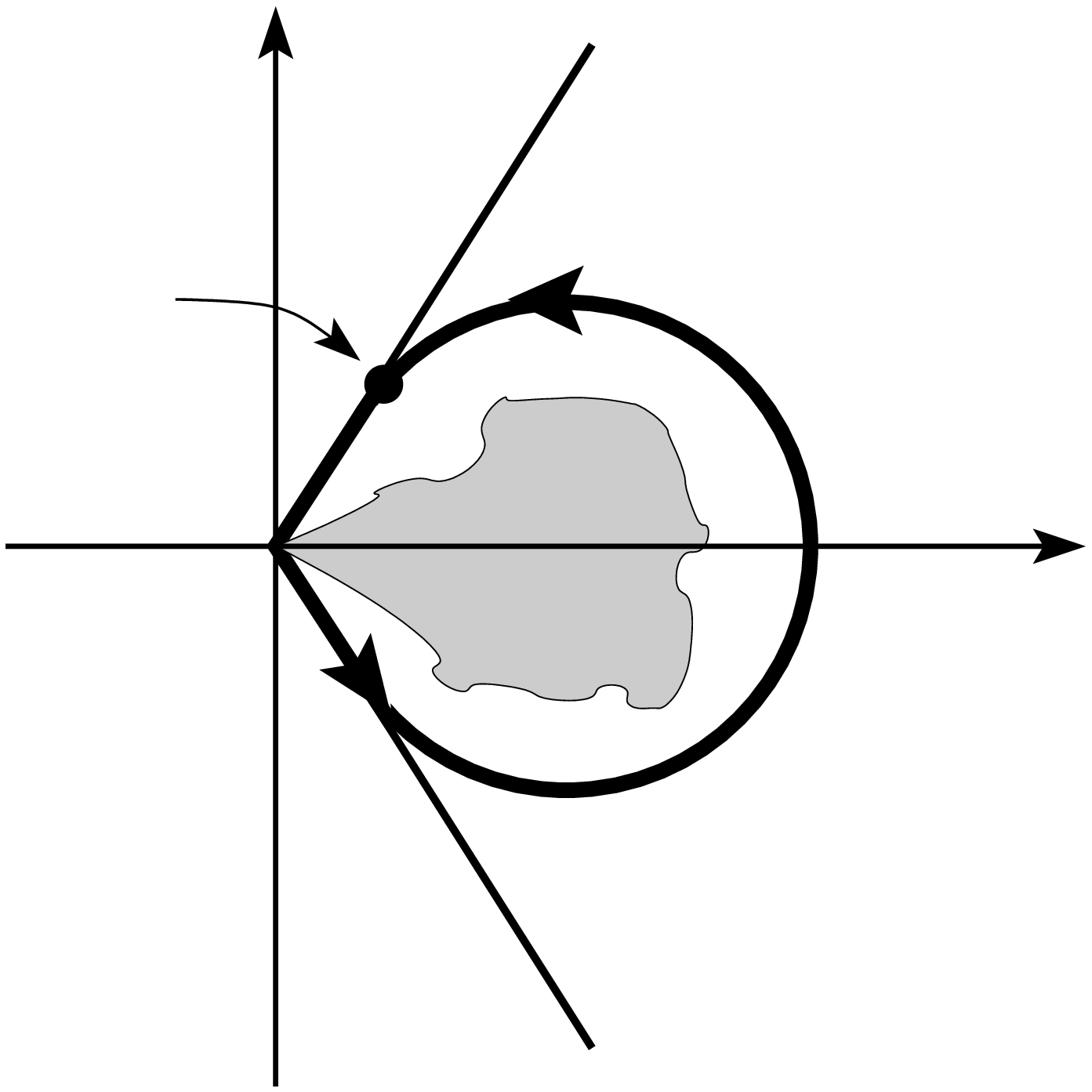}
\begin{picture}(0,0)
\put(-148,79){{\footnotesize $0$}}
\put(-97,174){{\small $\theta$}}
\put(-75,132){{\small $L_\theta$}}
\put(-100,98){{\small $\sigma(A)$}}
\put(-201,131){{\small $\cos(\theta)e^{{\rm i}\theta}$}}
\end{picture}
\end{center}
\caption{ \label{f2}}
\end{figure}

Let $\f_n(z)=n^{1/q'} z (1-z)^n$. Then by  the Dunford functional calculus
$$
\frac{1}{2\pi {\rm i}}\,\int_{L_{\theta}} \f_n(z)(z -A)^{-1}\, dz=\f_n(A)\;\text{ and }\;
\,\frac{1}{2\pi{\rm i}}\,\int_{L_{\theta}} \f_n(z)(z +A)^{-1}\, dz=0.
$$
Thus
$$
n^{1/q'} T^n(1-T)=\f_n(A) = \,\frac{1}{\pi{\rm i}}\,\int_{L_{\theta}} \f_n(z) A(z-A)^{-1}(z +A)^{-1}\, dz\,.
$$
Fix $f$  in the unit ball of  $Y$. Then
 $$
  \sum_{n=1}^\8n^{q-1}\big\|T^n(T-1)f\big\|_Y^q
 \les \int_{L_{\theta}} \sum_{n=1}^\8|\f_n(z)|^q\,\big\|A(z-A)^{-1}(z +A)^{-1}f\big\|_Y^q\, |dz|.
 $$
Note that for any $z\in L_{\theta}$, an elementary calculation shows that
 $$\sum_{n=1}^\8|\f_n(z)|^q
 \le \sup_{\l\in B_\theta}\,\sum_{n=1}^\8 n^{q-1}|\l|^{nq}|1-\l|^q
 \les \sup_{\l\in B_\theta}\,\frac{|1-\l|^q}{(1-|\l|)^q}\les 1,$$
where the relevant constants depend only on $q$ and $\theta$. On the other hand, by the $H^\8$ functional calculus, $A^{1/q}(z +A)^{-1}$ is a bounded operator on $Y$. Then we deduce
 $$\sum_{n=1}^\8n^{q-1}\big\|T^n(T-1)f\big\|_Y^q
 \les \int_{L_{\theta}} \big\|A^{1/q'}(z-A)^{-1}f\big\|_Y^q\, |dz|.$$
The contour $L_{\theta}$ is the juxtaposition of a part $L_{\theta,1}$ of $\Gamma_{\theta}$ (recalling that $\Gamma_\theta$ is the
boundary of the sector $\Sigma_\theta$)
and the curve $L_{\theta,2}$ going from  $\cos(\theta) e^{-{\rm i}\theta}$ to $\cos(\theta) e^{{\rm i}\theta}$
counterclockwise along the circle of center $1$ and radius $\sin\theta$. Accordingly,
 $$\int_{L_{\theta}} \big\|A^{1/q'}(z-A)^{-1}f\big\|_Y^q\, |dz|=\int_{L_{\theta, 1}}\big\|A^{1/q'}(z-A)^{-1}f\big\|_Y^q\, |dz|
 +\int_{L_{\theta,2}} \big\|A^{1/q'}(z-A)^{-1}f\big\|_Y^q\, |dz|.$$
Since $L_{\theta,2}\cap \sigma(A)=\emptyset$, the function $z\mapsto\|A^{1/q'}(z-A)^{-1}\|$ is bounded on $L_{\theta,2}$. Thus the second integral in the right hand side above is majorized by a constant independent of $f$ (recalling that $\|f\|_Y\le 1$).  For the first one, we have
  \begin{align*}
  \int_{L_{\theta, 1}}\big\|A^{1/q'}(z-A)^{-1}f\big\|_Y^q\, |dz|
 &\le \sum_{\e=\pm1}\, \int_0^\8 \big\|A^{1/q'}(te^{\e{\rm i}\theta}-A)^{-1}f\big\|_Y^q\, dt \\
 &=\sum_{\e=\pm1}\,\int_0^\8 \big\|(tA)^{1/q'}(e^{\e{\rm i}\theta}-tA)^{-1}f\big\|_Y^q\, \frac{dt}t \\
 &=\sum_{\e=\pm1}\,\int_0^\8 \big\|\f_\e(tA)f\big\|_Y^q\, \frac{dt}t \,,
  \end{align*}
where
$$
\f_\e(z)=\frac{z^{1/q'}}{e^{\e{\rm i}\theta} -z} \,,\quad \e=\pm1\,.
$$
Note that $\f_\e\in H^\8_0(\Sigma_\o)$ for  $\o\in(\theta,\, \pi/2)$. On the other hand, the function $\psi$ defined by $\psi(z)=ze^{-z}$ belongs to $H^\8_0(\Sigma_\o)$ too.   Thus applying Lemma~\ref{Mac}, we get
 $$\int_0^\8 \big\|\f_\e(tA)f\big\|_Y^q\, \frac{dt}t\les \int_0^\8 \big\|\psi(tA)f\big\|_Y^q\, \frac{dt}t
 =\int_0^\8 \big\|t\,\partial T_tf\big\|_Y^q\, \frac{dt}t\,,$$
where  $\{T_t\}_{t>0}=\{e^{-tA}\}_{t>0}$ is the semigroup already used at the beginning of the proof of Theorem~\ref{analyticity-discrete}. Thus by Lemma~\ref{p=q},
 $$ \int_0^\8 \big\|\psi(tA)f\big\|_Y^q\, \frac{dt}t\les \big\|f\big\|_Y\les 1.$$
Combining all preceding  inequalities, we finally get
 $$\sum_{n=1}^\8n^{q-1}\big\|T^n(T-1)f\big\|_{L_q(\O; X)}^q\les 1$$
 for any $f$ in the unit ball of $Y$. This yields \eqref{p=q: discrete} by homogeneity.

 \smallskip

Armed with \eqref{p=q: discrete}, we can finish the proof of Theorem~\ref{discrete} (i)  by Stein's complex interpolation machinery as in the continuous case. To that end, first recall that   Lemma~\ref{average} is deduced  by approximation from its discrete analogue in \cite{MTX}. Thus, although not explicitly stated there,  the discrete analogue of Lemma~\ref{average} is indeed  obtained during the proof of  \cite[Theorem~2.3]{MTX}. Then the interpolation arguments in the previous section can be modified to the present discrete setting. We refer the reader to \cite{stein-m} for the necessary ingredients. However, note that the presentation of  \cite{stein-m} is quite brief, it is developed in detail in \cite{JX}. We leave the details to the reader. Thus the proof of Theorem~\ref{discrete} is complete.
 \end{proof}

%%%%%%%%%%%%%%%%%%%%%%%%%%%%%%%%%%%%%%%%%%%%%%%%%%%%%%
%%%%%%%%%%%%%%%%%%%%%%%%%%%%%%%%%%%%%%%%%%%%%%%%%%%%%%

\section{Open problems}
\label{pb}

%%%%%%%%%%%%%%%%%%%%%%%%%%%%%%%%%%%%%%%%%%%%%%%%%%%%%%
%%%%%%%%%%%%%%%%%%%%%%%%%%%%%%%%%%%%%%%%%%%%%%%%%%%%%%

We conclude this article by some open problems. The first one concerns Theorem~\ref{discrete}. Note that in that theorem the contraction $T$ is assumed to be the square of another symmetric Markovian operator. Compared with the continuous case, this assumption is natural since every operator in a symmetric diffusion semigroup is automatically  the square of a symmetric Markovian operator.  However, a less restrictive assumption would be that $T$ is a selfadjoint contraction on  $L_2(\O)$ and its spectrum does not contain $-1$. Under this assumption, $T$ is analytic. If in addition $T$ is a  contraction on $L_p(\O)$ for every $1\le p\le\8$, then $T$ is also analytic on $L_p(\O)$ for every $1<p<\8$.

\begin{pb} \label{self}
 Let $T$ be a positive contraction on $L_p(\O)$ for every $1\le p\le\8$ with $T1=1$. Assume that $T$ is selfadjoint on  $L_2(\O)$ and its spectrum does not contain $-1$.
 \begin{enumerate}[{\rm(i)}]
 \item Let $X$ be a uniformly convex Banach space. Is the extension of $T$ to $L_p(\O; X)$ analytic for every $1<p<\8$ $($or equivalently, for one $1<p<\8)$?
 \item Let $X$ be a Banach space of martingale cotype $q$ and $1<p<\8$. Does one have
  $$\left\|\left(\sum_{n=1}^\8 n^{q-1}\big\|(T^n-T^{n-1})f\big\|_X^q\right)^{1/q}\right\|_{L_p(\O)}\les  \big\|f\big\|_{L_p(\O; X)}\,,\quad\forall\, f\in L_p(\O; X)?$$
  \end{enumerate}
\end{pb}

An affirmative answer to part (i) would imply the same for part (ii). In the spirit of \cite{pis2}, one can ask a similar question as part (i) for K-convex  $X$. In fact, we do not know whether Theorem~\ref{analyticity-discrete} holds for K-convex targets (see \cite{LL}  for related results). This is the discrete analogue of Problem~11 (i) of \cite{X} for symmetric diffusion semigroups.

\begin{pb}\label{target}
 Does Theorem~\ref{analyticity-discrete} remain true if $X$ is assumed K-convex?
\end{pb}

\begin{rk}
The answers to Problem~\ref{self} (i) and Problem~\ref{target} are both positive if $X$ is a complex interpolation space between a Hilbert space and a Banach space.This is the case if $X$ is a K-convex Banach lattice thanks to \cite{pis4}. More generally, let $(X_0,\, X_1)$ be a compatible pair of Banach spaces, and let $X=(X_0,\, X_1)_\theta$ with $0<\theta<1$. Assume that $T$ is a contraction on both $X_0$ and $X_1$, and $T$ is analytic on $X_1$. Then $T$ is analytic on $X$ too.
\end{rk}

Indeed, since the semigroup $\{e^{(T-1)t}\}_{t>0}$ is analytic on $X_1$, by Stein's complex interpolation, it is analytic on $X$ too. Thus by Lemma~\ref{analyticity}, it remains to show that as an operator on $X$, the spectrum of $T$ intersects $\T$ at most at the point $1$. The latter is equivalent to $\lim_{n\to\8}\|T^n(T-1): X\to X\|=0$, thanks to  Katznelson and Tzafriri's theorem \cite{KT}. Using the analyticity of $T$ on $X_1$ and interpolation, we get
 $$\|T^n(T-1): X\to X\|\les \frac1{n^\theta}\,.$$
 So we are done.

\smallskip

Hyt\"onen \cite{Hy} studied another variant of Stein's inequality \eqref{LPS} in the vector-valued setting. Like \cite{MTX}, his main theorem deals with the Poisson semigroup subordinated to a symmetric diffusion semigroup for a general UMD space $X$, except when $X$ is a complex interpolation space between  a Hilbert space and another UMD space. In the same spirit of this article, one may ask whether the main result of \cite{Hy} remains true for any symmetric diffusion semigroup and any UMD space $X$. It is easier to formulate this problem in the discrete case as follows. Let $T$ be as in Theorem~\ref{discrete}.

\begin{pb}
 Let $T$ be as in Theorem~\ref{discrete}, $X$ be a UMD space and $1<p<\8$. Does one have
  $$\mathbb{E}\big\|\sum_{n=1}^\8 \e_n\sqrt{n}\,(T^n-T^{n-1})f\big\|_{L_p(\O; X)}\approx  \big\|f-\mathsf F(f)\big\|_{L_p(\O; X)}\,,\quad\forall\, f\in L_p(\O; X)?$$
  \end{pb}
Here $\{\e_n\}$ is a sequence of symmetric random variables taking values $\pm1$ on a probability space and $\mathbb{E}$ is the corresponding expectation.

\bigskip \n{\bf Acknowledgements.} This work is partially supported by NSFC grants No. 11301401 and the French project ISITE-BFC (ANR-15-IDEX-03) and IUF.

\bigskip

%%%%%%%%%%%%%%%%%%%%%%%%%%%%%%%%%%%%%%%%%%%%%%%%%%%%%%
%%%%%%%%%%%%%%%%%%%%%%%%%%%%%%%%%%%%%%%%%%%%%%%%%%%%%%


\begin{thebibliography}{10}

%%%%%%%%%%%%%%%%%%%%%%%%%%%%%%%%%%%%%%%%%%%%%%%%%%%%%%
%%%%%%%%%%%%%%%%%%%%%%%%%%%%%%%%%%%%%%%%%%%%%%%%%%%%%%


\bibitem{bl}
J.~Bergh, and J.~L{\"o}fstr{\"o}m.
\newblock {\em
Interpolation spaces.}
\newblock Springer-Verlag, Berlin, 1976.

\bibitem{b}
 S.~Blunck
\newblock Maximal regularity of discrete and continuous time evolution equations.
\newblock\textit{Studia Math.} 146 (2001), 157-176.

\bibitem{CDMY}
  M.~Cowling, I.~Doust, A.~McIntosh, and A.~Yagi.
 \newblock Banach space operators with a bounded $H^{\infty}$ functional calculus.
\newblock\textit{J. Aust. Math. Soc.} 60 (1996), 51-89.

\bibitem{ds}
N.~Dunford, and J.T.~Schwartz.
\newblock {\em Linear {O}perators. {I}. {G}eneral {T}heory}.
\newblock Applied Mathematics, Vol. 7. Interscience Publishers, Inc., New York,
1958.


\bibitem{hm1}
G.~Hong, and T.~Ma.
\newblock Vector valued $q$-variation for ergodic averages and analytic semigroups.
\newblock\textit{J. Math. Anal. Appl.} 437 (2016) 1084-1100.


\bibitem{hm2}
G.~Hong, and T.~Ma.
\newblock Vector valued $q$-variation for differential operators and semigroups I.
\newblock\textit{Math. Z.} 286 (2017) 89-120.

 \bibitem{Hy}
 T.P.~Hyt\"onen.
\newblock Littlewood-Paley-Stein theory for semigroups in UMD spaces.
\newblock\textit{Rev. Mat. Iberoamericana} 23 (2007), 973-1009.


\bibitem{HN}
 T.P.~Hyt\"onen, and A.~Naor.
\newblock Heat flow and quantitative differentiation.
  \newblock\textit{J. Euro. Math. Soc.} To appear.

\bibitem{JX}
 M.~Junge, and Q.~Xu.
 \newblock Noncommutative maximal ergodic inequalities.
    \newblock\textit{J. Amer. Math. Soc.} 20 (2007), 385-439.

\bibitem{K}
 T.~Kato.
 \newblock A characterization of holomorphic semigroups.
   \newblock\textit{Proc. Amer. Math. Soc.} 25 (1970), 495-498.

\bibitem{KT}
Y.~Katznelson, and L.~Tzafriri.
\newblock  On power bounded operators.
  \newblock\textit{J. Funct. Anal.} 68 (1986), 13-328.

 \bibitem{LL}
 F.~Lancien, and C.~Le Merdy.
 \newblock The Ritt property of subordinated operators in the group case
   \newblock\textit{J. Math. Anal. Appli.} 462 (2018),191-209.

 \bibitem{LM}
  C.~Le Merdy.
 \newblock  The Weiss conjecture for bounded analytic semigroups.
   \newblock\textit{J. London Math. Soc.} 67 (2003) 715-738.

 \bibitem{LMX}
  C.~Le Merdy, and Q.~Xu.
 \newblock Maximal theorems and square functions for analytic operators on $L_p$-spaces.
    \newblock\textit{J. London Math. Soc.} 86 (2012), 343-365.

 \bibitem{MTX}
T. Mart\'{\i}nez, J. L.  Torrea, and Q. Xu.
  \newblock Vector-valued Littlewood-Paley-Stein theory for
  semigroups.
  \newblock {\em Adv. Math.} 203 (2006), 430-475.

\bibitem{MY}
  A.~McIntosh, and A.~Yagi.
   \newblock Operators of type $\omega$ without a bounded $H^\8$ functional calculus.
 \newblock\textit{Miniconference on Operators in Analysis, Proceedings of the Centre for Mathematics and its
Applications} 24 (Australian National University, Canberra, 1989) 159-172.

\bibitem{N}
O.~Nevanlinna.
\newblock \textit{Convergence of iterations for linear equations.}
\newblock Birk\"auser, Basel,1993.

\bibitem{pazy}
  A. Pazy.
\newblock \textit{Semigroups of linear operators and applications to partial differential equations.}
\newblock  Springer-Verlag New York, 1983.


\bibitem{pis1}
G. Pisier.
  \newblock Martingales with values in uniformly convex spaces,
    \newblock  {\em Israel J. Math.}, 20 (1975), 326-350.

\bibitem{pis3}
G. Pisier.
  \newblock  Probabilistic methods in the geometry of Banach spaces.
  \newblock  {\em Springer Lect. Notes in Math.}1206 (1986),  167-241.

\bibitem{pis2}
 G. Pisier.
  \newblock Holomorphic semigroups and the geometry of Banach spaces.
 \newblock  {\em Ann. Math. } 115 (1982), 375-392.

 \bibitem{pis4}
 G. Pisier.
  \newblock Some applications of the complex interpolation method to Banach lattices.
   \newblock  {\em J. Analyse Math.} 35, (1979), 264-281.

\bibitem{stein-m}
E.M.~Stein.
 \newblock On the maximal ergodic theorem.
  \newblock  {\em Proc. Nat. Acad. Sci. U.S.A.} 47 (1961), 1894-1897.

\bibitem{stein}
E.M.~Stein.
\newblock \textit{Topics in harmonic analysis related
to the Littlewood-Paley theory.}
\newblock  Ann. Math. Studies, Princeton, University Press, 1970.

\bibitem{tit} E.C. Titchmarsh.
\newblock \textit{The theory of functions.}
\newblock  Oxford University Press, 1939.

\bibitem{X}
 Q.~Xu.
  \newblock  Littlewood-{P}aley theory for functions with values
  in uniformly convex spaces.
  \newblock  {\em J. Reine Angew. Math.} 504 (1998), 195-226.

\end{thebibliography}
\end{document}